\documentclass[a4paper,11pt]{amsart}
\usepackage{amsmath,amssymb,amsfonts,amsthm}

\usepackage[alphabetic]{amsrefs }
\usepackage{bm}

\usepackage{graphicx}
\usepackage{ascmac}
\usepackage[all]{xy}
\usepackage{amsthm}
\usepackage[top=30mm,bottom=30mm,left=30mm,right=30mm]{geometry}
\usepackage{tikz}
\usetikzlibrary{patterns}
\usepackage{pxpgfmark}
\usepackage{multirow}
\usetikzlibrary{intersections,arrows,calc,decorations.markings}
\newtheorem{theorem}{Theorem}[section]

\newtheorem{proposition}[theorem]{Proposition}

\newtheorem{corollary}[theorem]{Corollary}
\newtheorem{lemma}[theorem]{Lemma}

\theoremstyle{definition}
\newtheorem{remark}[theorem]{Remark}
\newtheorem{example}[theorem]{Example}
\newtheorem{definition}[theorem]{Definition}

\makeatletter
 
 \@addtoreset{equation}{section}
\makeatother

\def\vv{\mathbf{v}}

\def\TT{\mathbb{T}}
\def\RR{\mathbb{R}}

\def\ZZ{\mathbb{Z}}
\def\QQ{\mathbb{Q}}

\title[Cluster duality between Calkin-Wilf tree and Stern-Brocot tree]{Cluster duality between \\Calkin-Wilf tree and Stern-Brocot tree}
\author{Yasuaki Gyoda}
\keywords{Calkin-Wilf tree, Stern-Brocot tree, cluster algebra, one-punctured torus}
\subjclass[2020]{11B57,11E99,13F60}
\address{Graduate School of Mathematical Sciences, The University of Tokyo, 3-8-1 Komaba Meguro-ku Tokyo 153-8914, Japan}
\email{gyoda-yasuaki@g.ecc.u-tokyo.ac.jp}
\begin{document}
\begin{abstract}
We find a duality between two well-known trees, the Calkin-Wilf tree and the Stern-Brocot tree, derived from cluster algebra theory. The vertex sets of these trees are the set of positive rational numbers, and they have cluster structures induced by a one-punctured torus. In particular, the Calkin-Wilf tree is an example of the structure given by initial-seed mutations.
\end{abstract}
\maketitle
\tableofcontents
\section{Introduction}
In this paper, we introduce two cluster structures into \emph{the Calkin-Wilf tree} and \emph{the Stern-Brocot tree}, which are dual to each other.

The Calkin-Wilf tree was introduced by Neil Calkin and Herbert S. Wilf \cite{cal-wil} to efficiently count all positive rational numbers. This is a full binary tree with positive fractions as vertices, given in the following way. The root is $\dfrac{1}{1}$ and the generating rule is that the parent $\dfrac{x}{y}$ has the following two children:
\begin{align*}
\begin{xy}(0,0)*+{\dfrac{x}{y}}="1",(-15,-15)*+{\dfrac{x}{x+y}}="2",(15,-15)*+{\dfrac{x+y}{y}}="3", \ar@{-}"1";"2"\ar@{-}"1";"3"
\end{xy}.
\end{align*}
Here, we do not reduce fractions even if they are reducible. We label edges 1,2,or 3 as follows: first, we label the edge between $\dfrac{1}{1}$ and $\dfrac{2}{1}$ with 1, and the edge between $\dfrac{1}{1}$ and $\dfrac{1}{2}$ with 2. Moreover, for
\begin{align*}
\begin{xy}(0,0)*+{\bullet}="1",(-15,-15)*+{\dfrac{x}{y}}="2",(15,-15)*+{\bullet,}="3", \ar@{-}_a"1";"2"\ar@{-}^b"1";"3"
\end{xy}
\end{align*}
we label edges between $\dfrac{x}{y}$ and their children as follows:
\begin{align*}
\begin{xy}(0,0)*+{\dfrac{x}{y}}="1",(-15,-15)*+{\dfrac{x}{x+y}}="2",(15,-15)*+{\dfrac{x+y}{y},}="3", \ar@{-}_c"1";"2"\ar@{-}^b"1";"3"
\end{xy}
\end{align*}
where $\{a,b,c\}=\{1,2,3\}$.
For
\begin{align*}
\begin{xy}(0,0)*+{\bullet}="1",(-15,-15)*+{\bullet}="2",(15,-15)*+{\dfrac{x}{y},}="3", \ar@{-}_a"1";"2"\ar@{-}^b"1";"3"
\end{xy}
\end{align*}
we label edges between $\dfrac{x}{y}$ and their children as follows:

\begin{align*}
\begin{xy}(0,0)*+{\dfrac{x}{y}}="1",(-15,-15)*+{\dfrac{x}{x+y}}="2",(15,-15)*+{\dfrac{x+y}{y}.}="3", \ar@{-}_a"1";"2"\ar@{-}^c"1";"3"
\end{xy}
\end{align*}

The first few terms are as follows:
\begin{align*}
\begin{xy}(40,0)*+{1/1}="2",(55,16)*+{2/1}="4",(55,-16)*+{1/2}="5", 
(80,24)*+{3/1}="6",(80,8)*+{2/3}="7",(80,-8)*+{3/2}="8",(110,28)*+{4/1\cdots}="10",(110,20)*+{3/4\cdots}="11",(110,12)*+{5/3\cdots}="12",(110,4)*+{2/5\cdots}="13",(110,-4)*+{5/2\cdots}="14",(110,-12)*+{3/5\cdots}="15",(110,-20)*+{4/3\cdots}="16",(110,-28)*+{1/4\cdots}="17",
(80,-24)*+{1/3}="9", \ar@{-}^1"2";"4"\ar@{-}_2"2";"5"\ar@{-}^3"4";"6"\ar@{-}_2"4";"7"\ar@{-}^1"5";"8"\ar@{-}"5";"9"_3\ar@{-}^1"6";"10"\ar@{-}_2"6";"11"\ar@{-}^3"7";"12"\ar@{-}_1"7";"13"\ar@{-}^2"8";"14"\ar@{-}_3"8";"15"\ar@{-}^1"9";"16"\ar@{-}_2"9";"17"
\end{xy}.
\end{align*}

It is easy to verify that all positive rational numbers appear exactly once in the tree and they are all irreducible fractions.

On the other hand, the Stern-Brocot tree\footnote{It is also called the \emph{Farey tree}.} is named after Moritz Stern, Achille Brocot, and their researches in 1800's \cites{stern,brocot}. This is a full binary tree given in the following way: we put $\dfrac{0}{1}$ on the bottom and $\dfrac{1}{0}$ on the top. First, place $\dfrac{1}{1}$ between $\dfrac{0}{1}$ and $\dfrac{1}{0}$. Then inductively place $\dfrac{a+c}{b+d}$ between $\dfrac{a}{b}$ and $\dfrac{c}{d}$. The children of $\dfrac{a}{b}$ are the left and right neighbors of $\dfrac{a}{b}$. The first few terms are as follows:
 \vspace{2mm}
\begin{align*}
\begin{xy}(40,0)*+{1/1}="2",(55,16)*+{2/1}="4",(55,-16)*+{1/2}="5", 
(80,24)*+{3/1}="6",(80,8)*+{3/2}="7",(80,-8)*+{2/3}="8",(110,28)*+{4/1\cdots}="10",(110,20)*+{5/2\cdots}="11",(110,12)*+{5/3\cdots}="12",(110,4)*+{4/3\cdots}="13",(110,-4)*+{3/4\cdots}="14",(110,-12)*+{3/5\cdots}="15",(110,-20)*+{2/5\cdots}="16",(110,-28)*+{1/4\cdots}="17",
(80,-24)*+{1/3}="9", \ar@{-}^1"2";"4"\ar@{-}_2"2";"5"\ar@{-}^3"4";"6"\ar@{-}_2"4";"7"\ar@{-}^1"5";"8"\ar@{-}_3"5";"9"\ar@{-}^1"6";"10"\ar@{-}_2"6";"11"\ar@{-}^3"7";"12"\ar@{-}_1"7";"13"\ar@{-}^2"8";"14"\ar@{-}_3"8";"15"\ar@{-}^1"9";"16"\ar@{-}_2"9";"17"
\end{xy}.
\end{align*}
Here, we do not reduce fractions even if they are reducible. The labeling rule is the same as the Calkin-Wilf tree's rule. 
As with the Calkin-Wilf tree, it is easy to verify that all positive rational numbers appear exactly once in the tree, and they are all irreducible fractions.
\begin{remark}\label{stern-brocot-remark}
The Stern-Brocot tree is also constructed in the following way:
first, we consider the \emph{Farey triple tree}. This is a full binary tree given in the following way: the root is $\left(\dfrac{0}{1},\dfrac{1}{0},\dfrac{1}{1}\right)$, and the generation rule is that a parent $\left(\dfrac{a}{b},\dfrac{c}{d},\dfrac{e}{f}\right)$ has the following two children: if the second largest fraction is (i) $\dfrac{a}{b}$, (ii) $\dfrac{c}{d}$, (iii)$\dfrac{d}{e}$, then 
\begin{align*}
\begin{xy}(-20,10)*+{\mathrm{(i)}}="0",(0,0)*+{\left(\dfrac{a}{b},\dfrac{c}{d},\dfrac{e}{f}\right)}="1",(-12.5,-15)*+{\left(\dfrac{a}{b},\dfrac{a+e}{b+f},\dfrac{e}{f}\right)}="2",(12.5,-15)*+{\left(\dfrac{a}{b},\dfrac{c}{d},\dfrac{a+c}{b+d}\right)}="3", \ar@{-}_2"1";"2"\ar@{-}^3"1";"3"
\end{xy}
\begin{xy}(-20,10)*+{\mathrm{(ii)}},(0,0)*+{\left(\dfrac{a}{b},\dfrac{c}{d},\dfrac{e}{f}\right)}="1",(-12.5,-15)*+{\left(\dfrac{c+e}{d+f},\dfrac{c}{d},\dfrac{e}{f}\right)}="2",(12.5,-15)*+{\left(\dfrac{a}{b},\dfrac{c}{d},\dfrac{a+c}{b+d}\right)}="3", \ar@{-}_1"1";"2"\ar@{-}^3"1";"3"
\end{xy}
\begin{xy}(-20,10)*+{\mathrm{(iii)}},(0,0)*+{\left(\dfrac{a}{b},\dfrac{c}{d},\dfrac{e}{f}\right)}="1",(-12.5,-15)*+{\left(\dfrac{c+e}{d+f},\dfrac{c}{d},\dfrac{e}{f}\right)}="2",(12.5,-15)*+{\left(\dfrac{a}{b},\dfrac{a+e}{b+f},\dfrac{e}{f}\right)}="3",(25,-18)*+{.}, \ar@{-}_1"1";"2"\ar@{-}^2"1";"3"
\end{xy}
\end{align*}
The Stern-Brocot tree is a full binary tree obtained from the Farey triple tree by replacing each vertex with the second largest fraction of it.
\end{remark}

It has been pointed out that there are several relations between these two trees. For example, Backhouse and Ferreira found a relation between these two trees and \emph{matrix trees} \cites{backfer,backfer2,stan}. In this paper, we introduce a new relation between these trees derived from cluster algebra theory. 

Let us consider a one-punctured torus and its triangulations (see Figure \ref{fig:triangulationtorus}).
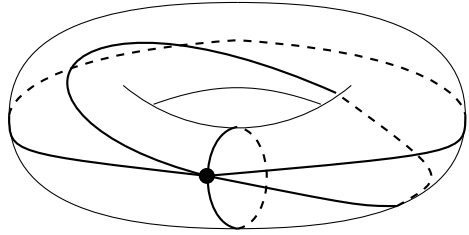
\begin{figure}[ht]
    \centering
    \caption{Triangulation of one-punctured torus}
    \label{fig:triangulationtorus}
    \vspace{2mm}
\begin{tikzpicture}[baseline=0mm]
\coordinate (u) at (0,1.5);
\coordinate (d) at (0,-1.5);
\coordinate (m1) at (-3,0);
\coordinate (m2) at (-1.5,0.4);
\coordinate (m3) at (1.5,0.4);
\coordinate (m4) at (3,0);
\coordinate(m5)at (-1.1,0.15);
\coordinate(m6)at (1.1,0.15);
\coordinate (p) at (-0.4,-0.8);
\coordinate (m7) at (0,-0.15);
 \fill(p) circle(3pt);
 \draw (u) to [out=180,in=90] (m1);
 \draw (m1) to [out=-90,in=180] (d);
 \draw (d) to [out=0,in=-90] (m4);
 \draw (m4) to [out=90,in=0] (u);
 \draw (m2) to [out=-40,in=-140] (m3); 
 \draw (m5) to [out=20,in=160] (m6); 
 \draw[thick] (d) to [out=170,in=-170] (m7);
 \draw[thick,dashed] (d) to [out=10,in=-10] (m7);
 \draw[thick] (p).. controls (-3,-0.5)and (-3, -0.5) .. (-3,0);
 \draw[thick] (p).. controls (3,-0.5)and (3, -0.5) .. (3,0);
 \draw[thick,dashed] (-3,0) .. controls (-3,0)and (-3,0.8) .. (0,1) to (0,1).. controls (3,0.8)and (3,0) .. (3,0);
\draw [thick](p) .. controls (1.6,-1.2)and (1.6, -1.2) ..  (2.1,-1.2); 
\draw [thick,dashed](2.1,-1.2)  .. controls (2.8,-0.8)and (2.8, -0.8) .. (1.3,0.3); 
\draw [thick](1.3,0.3)  .. controls (-2.5,2)and (-3.5, 0) .. (p); 
\end{tikzpicture}
\end{figure}
We fix a triangulation $L=(\ell_1,\ell_2,\ell_3)$. For an arc $\ell$ included in some triangulation, we set $D(L,\ell)=\begin{bmatrix}d_1\\d_2\\d_3\end{bmatrix}$, where $d_i$ is the intersection number of $\ell$ and $\ell_i$ (note that the intersection number of itself is $-1$). It is called the \emph{intersection vector}.
Next, we consider an operation called a \emph{flip}, which we will define strictly in Section 2. This is an operation to obtain a new triangulation by exchanging one of the edges constituting the triangulation. The Figure \ref{flip-triangulation} shows an example.
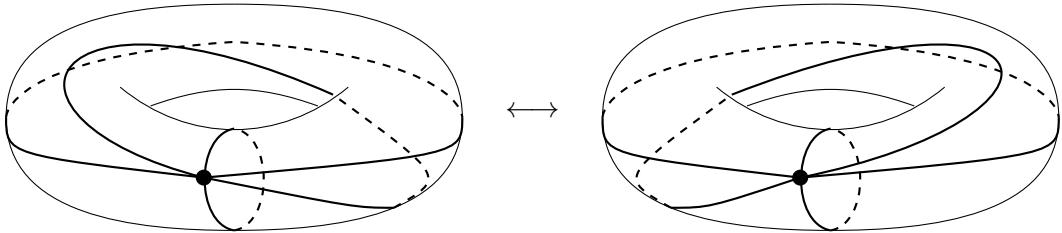
\begin{figure}[ht]
    \centering
    \caption{Flip of triangulation}
    \label{flip-triangulation}
    \vspace{2mm}
\begin{tikzpicture}[baseline=0mm]
\coordinate (u) at (0,1.5);
\coordinate (d) at (0,-1.5);
\coordinate (m1) at (-3,0);
\coordinate (m2) at (-1.5,0.4);
\coordinate (m3) at (1.5,0.4);
\coordinate (m4) at (3,0);
\coordinate(m5)at (-1.1,0.15);
\coordinate(m6)at (1.1,0.15);
\coordinate (p) at (-0.4,-0.8);
\coordinate (m7) at (0,-0.15);
 \fill(p) circle(3pt);
 \draw (u) to [out=180,in=90] (m1);
 \draw (m1) to [out=-90,in=180] (d);
 \draw (d) to [out=0,in=-90] (m4);
 \draw (m4) to [out=90,in=0] (u);
 \draw (m2) to [out=-40,in=-140] (m3); 
 \draw (m5) to [out=20,in=160] (m6); 
 \draw[thick] (d) to [out=170,in=-170] (m7);
 \draw[thick,dashed] (d) to [out=10,in=-10] (m7);
 \draw[thick] (p).. controls (-3,-0.5)and (-3, -0.5) .. (-3,0);
 \draw[thick] (p).. controls (3,-0.5)and (3, -0.5) .. (3,0);
 \draw[thick,dashed] (-3,0) .. controls (-3,0)and (-3,0.8) .. (0,1) to (0,1).. controls (3,0.8)and (3,0) .. (3,0);
\draw [thick](p) .. controls (1.6,-1.2)and (1.6, -1.2) ..  (2.1,-1.2); 
\draw [thick,dashed](2.1,-1.2)  .. controls (2.8,-0.8)and (2.8, -0.8) .. (1.3,0.3); 
\draw [thick](1.3,0.3)  .. controls (-2.5,2)and (-3.5, 0) .. (p); 
\end{tikzpicture}
\hspace{3mm}
$\longleftrightarrow$
\hspace{3mm}
\begin{tikzpicture}[baseline=0mm]
\coordinate (u) at (0,1.5);
\coordinate (d) at (0,-1.5);
\coordinate (m1) at (-3,0);
\coordinate (m2) at (-1.5,0.4);
\coordinate (m3) at (1.5,0.4);
\coordinate (m4) at (3,0);
\coordinate(m5)at (-1.1,0.15);
\coordinate(m6)at (1.1,0.15);
\coordinate (p) at (-0.4,-0.8);
\coordinate (m7) at (0,-0.15);
 \fill(p) circle(3pt);
 \draw (u) to [out=180,in=90] (m1);
 \draw (m1) to [out=-90,in=180] (d);
 \draw (d) to [out=0,in=-90] (m4);
 \draw (m4) to [out=90,in=0] (u);
 \draw (m2) to [out=-40,in=-140] (m3); 
 \draw (m5) to [out=20,in=160] (m6); 
 \draw[thick] (d) to [out=170,in=-170] (m7);
 \draw[thick,dashed] (d) to [out=10,in=-10] (m7);
 \draw[thick] (p).. controls (-3,-0.5)and (-3, -0.5) .. (-3,0);
 \draw[thick] (p).. controls (3,-0.5)and (3, -0.5) .. (3,0);
 \draw[thick,dashed] (-3,0) .. controls (-3,0)and (-3,0.8) .. (0,1) to (0,1).. controls (3,0.8)and (3,0) .. (3,0);
\draw [thick](p) .. controls (-1.6,-1.2)and (-1.6, -1.2) ..  (-2.1,-1.2); 
\draw [thick,dashed](-2.1,-1.2)  .. controls (-2.8,-0.8)and (-2.8, -0.8) .. (-1.3,0.3); 
\draw [thick](-1.3,0.3)  .. controls (3,2)and (3.5, 0) .. (p); 
\end{tikzpicture}
\end{figure}
\\
There are three ways to flip for each triangle.
Repeatedly applying flips to triangulations results in a full binary tree, called the \emph{intersection vector tree}.
\begin{align*}
\begin{xy}(40,0)*+{\begin{bmatrix}0\\0\\1\end{bmatrix}}="2",(57,16)*+{\begin{bmatrix}1\\0\\2\end{bmatrix}}="4",(57,-16)*+{\begin{bmatrix}0\\1\\2\end{bmatrix}}="5", 
(80,24)*+{\begin{bmatrix}2\\0\\3\end{bmatrix}\cdots}="6",(80,8)*+{\begin{bmatrix}2\\1\\4\end{bmatrix}\cdots}="7",(80,-8)*+{\begin{bmatrix}1\\2\\4\end{bmatrix}\cdots}="8",(80,-24)*+{\begin{bmatrix}0\\2\\3\end{bmatrix}\cdots}="9", \ar@{-}"2";"4"\ar@{-}"2";"5"\ar@{-}"4";"6"\ar@{-}"4";"7"\ar@{-}"5";"8"\ar@{-}"5";"9"
\end{xy}.
\end{align*}
The first main theorem relates the intersection vector tree to the Stern-Brocot tree:
\begin{theorem}[Theorem \ref{thm:main1}]\label{thm:main1intro}
We consider a map
\begin{align*}
    g\colon\ZZ_{\geq0}^3 \to \QQ,\quad \begin{bmatrix}d_1\\d_2\\d_3\end{bmatrix}\mapsto \dfrac{d_1+1}{d_2+1}.
\end{align*}
The Stern-Brocot tree is obtained by replacing each vertex $v$ of the intersection vector tree with $g(v)$.
\end{theorem}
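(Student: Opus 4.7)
The plan is to translate the intersection vector flip into the tropical $d$-vector mutation rule for the once-punctured torus, establish a simple linear invariant, and match the resulting tree with the Farey triple tree so that the Remark delivers the Stern-Brocot tree.

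First, I would set up the flip formula. The intersection vector $D(L,\ell)$ is the denominator vector of the cluster variable associated to the arc $\ell$ with respect to the initial seed determined by $L$, and the exchange matrix of the once-punctured torus is skew-symmetric with all nonzero off-diagonal entries equal to $\pm2$, a property preserved under mutation. The standard $d$-vector mutation rule therefore reduces to the clean form
\[
d(v_k')=2\max\bigl(d(v_i),d(v_j)\bigr)-d(v_k),
\]
where $(v_1,v_2,v_3)$ is the current triangulation, $\{i,j,k\}=\{1,2,3\}$, $v_k'$ is the result of flipping $v_k$, and $\max$ is taken component-wise.

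Next I would introduce the auxiliary map $G\colon\ZZ^3\to\ZZ^2$, $[d_1,d_2,d_3]^{T}\mapsto(d_1+1,d_2+1)$, for which $g(v)=G(v)_1/G(v)_2$, and prove by induction on the depth in the intersection vector tree the following additivity invariant: at each node whose triangulation is $(v_a,v_b,v_c)$, with $v_c$ the arc introduced by the most recent flip, one has
\[
G(d(v_c))=G(d(v_a))+G(d(v_b)).
\]
The base case is the root $[0,0,1]$, associated to $(\ell_1,\ell_2,\ell_3')$ with $G$-values $(0,1),(1,0),(1,1)$; the identity $(1,1)=(0,1)+(1,0)$ is clear. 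For the inductive step, if one flips the old arc $v_a$, the inductive hypothesis reads entry-wise $d(v_c)_i=d(v_a)_i+d(v_b)_i+1$, and since every entry of a $d$-vector is at least $-1$, this forces $d(v_c)_i\geq d(v_b)_i$, so the component-wise maximum in the flip formula is attained by $d(v_c)$. A direct computation then gives $G(d(v_a'))=2d(v_c)_i-d(v_a)_i+1=d(v_a)_i+2d(v_b)_i+3=G(d(v_b))_i+G(d(v_c))_i$, as required.

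The invariant says that at every node the new arc's $g$-value is the mediant of the two old arcs' $g$-values and lies strictly between them, so the triple $(g(v_1),g(v_2),g(v_3))$ is a Farey triple with $g(v_c)$ as its second-largest element. To conclude, one matches the two binary trees node-by-node by tracking the position of the ``new'' arc in the triple: flipping each of the two old arcs produces precisely the two children prescribed by cases (i), (ii), (iii) of the Remark, so the intersection vector tree maps under $G$ to the Farey triple tree and under $g$ (taking second-largest entries) to the Stern-Brocot tree. The main technical point is this last case-matching step; once the additivity invariant is in hand, it reduces to a short case analysis based on which coordinate of $(v_1,v_2,v_3)$ the newly introduced arc occupies at each stage.
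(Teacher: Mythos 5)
Your argument is correct, but it runs along a genuinely different track from the paper's. The paper works geometrically on the universal cover: it assigns to each arc its slope, proves that the middle-slope arc of any triangulation is the mediant of the other two (Lemma \ref{lem:incsum}), controls how flips permute the slope ordering (Lemma \ref{lem:flip-gradient}), and translates slope into intersection vector via the closed formula $D(L,\ell)=[a-1,b-1,a+b-1]^{T}$ for $\mathrm{grad}_L(\ell)=a/b$ (Lemma \ref{lem:int-inc}); the mediant recursion for the Farey triple tree then falls out of the geometry. You instead work entirely on the recursion side: you invoke the identification of intersection vectors with denominator vectors, use the tropical mutation rule $d(v_k')=2\max(d(v_i),d(v_j))-d(v_k)$ for the Markov quiver, and extract the additivity invariant $G(d(v_c))=G(d(v_a))+G(d(v_b))$ by induction, which is exactly the mediant rule after the shift by $1$. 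Both proofs converge on the Farey triple tree of the Remark, and your closing case-match of flip directions against cases (i)--(iii) is at the same level of detail as the paper's. What your route buys is independence from the planar model (it would survive in situations with no slope picture) and a very clean one-line invariant; what it costs is self-containedness: the formula $d(v_k')=2\max(d(v_i),d(v_j))-d(v_k)$ for \emph{intersection numbers} is not available inside the paper's framework without either citing the Fomin--Thurston identification of intersection numbers with $d$-vectors (the paper only mentions this in a remark and never uses it) or reproving the recursion geometrically, essentially via the same universal-cover analysis the paper carries out. The paper's geometric lemmas also do double duty later (Corollary \ref{cor:vi--viii} and the proof of Theorem \ref{thm:main2} depend on the explicit form of $D(L,\ell)$), so your shortcut here would not eliminate them from the paper as a whole. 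Two small points to tighten: state explicitly that the exchange matrix at every seed of the Markov quiver has off-diagonal entries $\pm2$ with the sign pattern making the two tropical terms equal to $2d(v_i)$ and $2d(v_j)$, and note that the bound $d\geq-1$ you use is immediate here because the entries are intersection numbers by definition.
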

Next, we introduce a counterpart of the Calkin-Wilf tree. In contrast to the previous, we fix an arc $\ell$ and consider changing triangulations from $L$ to $L'$ by a flip. In parallel with change of triangulations, we obtain another intersection vector $D(L',\ell)$. By doing it repeatedly, we define another tree, the \emph{initial intersection vector tree}: 
\begin{align}\label{treeintro}
\begin{xy}(40,0)*+{\begin{bmatrix}0\\0\\1\end{bmatrix}}="2",(57,16)*+{\begin{bmatrix}2\\0\\1\end{bmatrix}}="4",(57,-16)*+{\begin{bmatrix}0\\2\\1\end{bmatrix}}="5", 
(80,24)*+{\begin{bmatrix}2\\0\\3\end{bmatrix}\dots}="6",(80,8)*+{\begin{bmatrix}2\\4\\1\end{bmatrix}\dots}="7",(80,-8)*+{\begin{bmatrix}4\\2\\1\end{bmatrix}\dots}="8",(80,-24)*+{\begin{bmatrix}0\\2\\3\end{bmatrix}\dots}="9", \ar@{-}^1"2";"4"\ar@{-}_2"2";"5"\ar@{-}^3"4";"6"\ar@{-}_2"4";"7"\ar@{-}^1"5";"8"\ar@{-}_3"5";"9"
\end{xy},
\end{align}
where the numbers around each edge indicate the position of the edge that will be exchanged in each triangulation. Let $d(t,t')$ be the distance from $t$ and $t'$ on the tree. The second main theorem relates the initial intersection vector tree to the Calkin-Wilf tree:
\begin{theorem}[Theorem \ref{thm:main2}]\label{thm:main2intro}
  We define a map $h$ from vertices of the initial intersection vector tree to $\QQ$
 inductively as follows: we assign the leftmost vertex $\begin{bmatrix}d_{1;t_1}\\d_{2;t_1}\\d_{3;t_1}\end{bmatrix}=\begin{bmatrix}0\\0\\1\end{bmatrix}$ to 
 $\dfrac{d_{1;t_0}+1}{d_{2;t_0}+1}=\dfrac{1}{1}$. Let $\{a,b,c\}=\{1,2,3\}$. When $D(L_t,\ell)=\begin{bmatrix}d_{1;t}\\d_{2;t}\\d_{3;t}\end{bmatrix}\mapsto \dfrac{d_{a;t}+1}{d_{b;t}+1}$, and $\begin{xy}(0,0)*+{D(L_t,\ell)}="A",(23,0)*+{D(L_{t'},\ell)}="B",\ar@{-}^k"A";"B" \end{xy}$ with $d(t_1,t)<d(t_1,t')$, 
\begin{itemize}
    \item if $k=a$, then we assign $D(L_{t'},\ell)\mapsto \dfrac{d_{c;t}+1}{d_{b;t}+1}$,
    \vspace{2mm}
    \item if $k=b$, then we assign $D(L_{t'},\ell)\mapsto \dfrac{d_{a;t}+1}{d_{c;t}+1}$.
    \end{itemize}
The Calkin-Wilf tree is obtained by replacing each vertex $v$ of the initial intersection vector tree with $h(v)$.
\end{theorem}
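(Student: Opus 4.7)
I would prove Theorem~\ref{thm:main2intro} by induction on depth in the initial intersection vector tree, maintaining an auxiliary Markov-type invariant and matching both vertex values and edge labels with the Calkin-Wilf tree along the way. The base case is immediate: the root $D(L_{t_1},\ell)=\begin{bmatrix}0\\0\\1\end{bmatrix}$ is mapped to $(0+1)/(0+1)=1/1$, the root of the Calkin-Wilf tree.

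\textbf{Key geometric input.} The main computational input is the $d$-vector mutation rule under an initial-seed flip on the once-punctured torus: flipping $L_t$ at position $k$ to obtain $L_{t'}$ leaves $d_{j;t'}=d_{j;t}$ for $j\ne k$, while
\begin{equation*}
d_{k;t} + d_{k;t'} \;=\; 2\max\bigl(d_{i;t},\,d_{j;t}\bigr),\qquad \{i,j,k\}=\{1,2,3\}.
\end{equation*}
This is the standard $d$-vector mutation formula for the Markov cluster algebra of the once-punctured torus (whose exchange matrix has entries $\pm 2$), and will be available from the geometric setup of earlier sections of the paper.

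\textbf{Invariant and induction.} The core of the argument is the following Markov-type invariant, which I prove in tandem with the theorem: at every vertex $D(L_t,\ell)$ of the tree there is a distinguished index $c=c(t)\in\{1,2,3\}$, namely the label of the edge by which the vertex was entered (with $c=3$ at the root), such that
\begin{equation*}
d_{c;t} \;=\; d_{a;t}+d_{b;t}+1,\qquad \{a,b,c\}=\{1,2,3\}.
\end{equation*}
At the root this reads $1=0+0+1$. For the induction step at $t$, the two children are obtained by flipping at the small positions $a$ and $b$ (flipping at $c$ would return to the parent), and the mutation formula together with the invariant gives $\max(d_{b;t},d_{c;t})=d_{c;t}$, so flipping at $a$ yields $d_{a;t'}=2d_{c;t}-d_{a;t}=d_{b;t}+d_{c;t}+1$, which both preserves the invariant and identifies $c(t')=a$ with the just-flipped edge label. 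Rewriting the invariant as $(d_{c;t}+1)=(d_{a;t}+1)+(d_{b;t}+1)$ and setting $h(v)=(d_{a;t}+1)/(d_{b;t}+1)=x/y$ by induction, the assignment of the theorem produces $h(v')=(d_{c;t}+1)/(d_{b;t}+1)=(x+y)/y$ when $k=a$ and $h(v'')=(d_{a;t}+1)/(d_{c;t}+1)=x/(x+y)$ when $k=b$; these are precisely the right and left Calkin-Wilf children of $x/y$. The outgoing edges from $t$ carry the labels $a$ and $b$, while the ``missing'' label at $t$ is $c=c(t)$, the label of the incoming edge, which coincides with the inheritance rule of the Calkin-Wilf edge labeling.

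\textbf{Main obstacle.} The main obstacle is coordinating the $d$-vector mutation formula with the Markov invariant so that the max in $d_{k;t}+d_{k;t'}=2\max(d_{i;t},d_{j;t})$ is always realized at the big index $c$, producing the clean recursion $d_k+d_k'=2d_c$ and hence the Stern-Brocot mediant at the fraction level. A secondary, routine point is to verify that the position-flipped edge labeling of the initial intersection vector tree agrees with the $\{a,b,c\}$-inheritance rule of the Calkin-Wilf tree, which follows once $c(t)$ is identified with the incoming-edge label at each vertex.
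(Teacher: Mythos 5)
Your argument is correct in substance and reaches the same key identity as the paper, but it gets there by a genuinely different route. The paper's proof of Theorem \ref{thm:main2} does not use the mutation recurrence $d_{k;t}+d_{k;t'}=2\max(d_{i;t},d_{j;t})$ at all; instead it transposes the explicit description of $D(L,L_t)$ obtained from the gradient/Farey analysis of Section 3 (Corollary \ref{cor:vi--viii} via Proposition \ref{prop:duality}) to get Lemma \ref{lem:description-dual}, whose three cases (i)--(iii) encode exactly your invariant: in each case one distinguished row of $D(L_t,L)$ is the entrywise sum of the other two plus one, so restricted to the third column this is precisely $d_{c;t}=d_{a;t}+d_{b;t}+1$, i.e.\ $(d_{c;t}+1)=(d_{a;t}+1)+(d_{b;t}+1)$, and the mediant computation you perform is the same one the paper carries out case by case. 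What your approach buys is a leaner induction that never needs the full $3\times 3$ matrix bookkeeping or the transition diagram between cases (i)--(iii); what it costs is that your ``key geometric input'' is not actually established anywhere in the paper. The recurrence $d_{k;t}+d_{k;t'}=2\max(d_{i;t},d_{j;t})$ is true for the once-punctured torus (it is the tropical Ptolemy/skein relation for geometric intersection numbers, or equivalently the denominator-vector recurrence for the Markov exchange matrix combined with the symmetry $D(L_t,L)=(D(L,L_t))^T$), but it does not follow ``from the geometric setup of earlier sections'' without additional work: the earlier sections only give the gradient parametrization of Lemmas \ref{lem:incsum}--\ref{lem:int-inc}, and deriving the recurrence from those essentially amounts to re-proving Corollary \ref{cor:vi--viii}. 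So you should either cite the tropical exchange relation explicitly or derive the recurrence from Lemma \ref{lem:int-inc}; as written, that single unproved formula is the only real gap. Your treatment of the edge labels (outgoing labels are the numerator and denominator indices $a,b$, the incoming label is the distinguished index $c$) is correct and is in fact slightly more explicit than the paper's own discussion of why the Calkin--Wilf labeling rule is reproduced.
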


In the context of cluster algebra theory, we can regard the relation between Theorem \ref{thm:main1intro} and Theorem \ref{thm:main2intro} as a specialization of the \emph{$D$-matrix duality} introduced by \cite{rs}\footnote{In this paper, for the convenience of the proof, we describe the duality of the $D$-matrix, which is true only for some classes of cluster algebras. This duality property can be attributed to the duality of the $F$-matrix, which is similar to the $D$-matrix. See \cite{fg} for details.}. 

In the last section, we mention the duality of the Cohn tree and the Christoffel tree using the duality of Theorem \ref{thm:main1intro} and Theorem \ref{thm:main2intro}.

Trees in this paper and their relations are shown in Figure \ref{relationtree}. Here, each dashed arrow indicates that the tree at the end of the arrow can be constructed by extracting some of the entries contained in each vertex of the tree at the start of the arrow.

\begin{figure}[ht]
    \caption{Relation of trees}
    \label{relationtree}
    \vspace{4mm}
\scalebox{0.88}{{\begin{tikzpicture}
\node(0) at (-3,0){Farey triple}; 
\node(0') at (-3,-0.3){tree}; 
\node(1) at (0,0){Stern-Brocot tree}; 
\node(2) at (8,0){Calkin-Wilf tree}; 
\node(3) at (0,2.6){Intersection vector tree}; 
\node(3') at (0,2.2){Tree($D$)}; 
\node(4) at (8,2.6){Initial intersection};
\node(4') at (8,2.2){vector tree Tree($D^\dag$)};
\node(7) at (8,-2.4){Cohn tree}; 
\node(7') at (8,-2.8){Tree(Co)}; 
\node(9) at (0,-2.4){Christoffel tree};
\node(9') at (0,-2.8){Tree(Ch)}; 
\node(11) at (-3.5,-2.4){Christoffel triple};
\node(11') at (-3.5,-2.8){tree Tree(3Ch)};
\node(10) at (11,-2.4){Cohn triple};
\node(10') at (11,-2.8){tree Tree(3Co)}; 
\draw[<->](1) to (3');
\draw[->](9) to (1);
\draw[->](9) to (2);
\draw[->](7) to (1);
\draw[->](7) to (2);
\draw[dashed,->](10) to (7);
\draw[dashed,->](11) to (9);
\draw[dashed,<-](1) to (0);
\draw[<->](2) to (4');
\draw[<->](3) to node[pos=0.5,auto=left] {\footnotesize Duality (Proposition \ref{prop:duality})}(4);
\fill [white] (1.6,-1.5) circle [x radius=1cm, y radius=5mm];
\fill [white] (6.4,-1.5) circle [x radius=1cm, y radius=5mm];
\node(0) at (-1.5,1.3){\footnotesize Theorem \ref{thm:main1intro}  (\ref{thm:main1})/};
\node(0) at (-1,0.9){\footnotesize Remark \ref{contrarymain1}};
\node(0) at (9.6,1.3){\footnotesize Theorem \ref{thm:main2intro} (\ref{thm:main2})/};
\node(0) at (9.3,0.9){\footnotesize Remark \ref{contrarymain2}};
\node(0) at (-1.25,-1.3){\footnotesize Theorem \ref{Christoffel-main} (3)};
\node(0) at (9.3,-1.3){\footnotesize Theorem \ref{dualChristoffel-main} (3)};
\node(0) at (2,-1.8){\footnotesize Theorem \ref{Christoffel-main} (2)};
\node(0) at (6,-1.8){\footnotesize Theorem \ref{dualChristoffel-main} (2)};
\end{tikzpicture}}}
\end{figure}

\subsection*{Organization}
In Section 2, we introduce the cluster structure of one-punctured torus and define the intersection vector and matrix. In Section 3, we prove Theorem \ref{thm:main1intro} and give an explicit description of the intersection matrix. In Section 4, we prove Theorem \ref{thm:main2intro}. In Section 5, we define the Cohn tree and the Christoffel tree and describe their duality.

\subsection*{Acknowledgment}
The author appreciates the feedback offered by his supervisor Tomoki Nakanishi. The author also would like to thank Akira Kondo, Satoshi Sugiyama, and Yudai Suzuki for their insightful comments. This work was supported by JSPS KAKENHI Grant number JP20J12675.
\section{Cluster pattern from one-punctured torus}
In this section, we introduce the cluster pattern from a one-punctured torus. This is a special case of the cluster structure from marked surfaces introduced by \cite{fst}. Let $S$ be a one-punctured torus and let $p$ be the only puncture of $S$.  Consider the triangulation of $S$ by arcs with $p$ at both ends; on the universal covering of $S$, the triangulation of $S$ is given by Figure \ref{fig:triangulation}.
\begin{figure}[ht]
\caption{Triangulation of one-punctured torus (universal covering)\label{fig:triangulation}}
\vspace{3mm}
\reflectbox{\begin{tikzpicture}
\node[circle,fill=black,inner sep=1.5](1) at (0,0){}; 
\node[circle,fill=black,inner sep=1.5](2) at (0,-1){}; 
\node[circle,fill=black,inner sep=1.5](3) at (0,-2){}; 
\node[circle,fill=black,inner sep=1.5](5) at (1,0){}; 
\node[circle,fill=black,inner sep=1.5](6) at (1,-1){}; 
\node[circle,fill=black,inner sep=1.5](7) at (1,-2){}; 
\node[circle,fill=black,inner sep=1.5](8) at (2,-1){}; 
\node[circle,fill=black,inner sep=1.5](9) at (2,0){};
\node[circle,fill=black,inner sep=1.5](10) at (2,1){};  
\node[circle,fill=black,inner sep=1.5](11) at (1,1){}; 
\node[circle,fill=black,inner sep=1.5](12) at (0,1){}; 
\node[circle,fill=black,inner sep=1.5](13) at (2,-2){}; 
\node[circle,fill=black,inner sep=1.5](14) at (3,1){}; 
\node[circle,fill=black,inner sep=1.5](15) at (3,0){}; 
\node[circle,fill=black,inner sep=1.5](16) at (3,-1){}; 
\node[circle,fill=black,inner sep=1.5](17) at (3,-2){}; 
\draw(-0.5,1) to (3.5,1);
\draw(-0.5,0) to (3.5,0);
\draw(-0.5,-1) to (3.5,-1);
\draw(-0.5,-2) to (3.5,-2);
\draw(0,1.5) to (0,-2.5);
\draw(1,1.5) to (1,-2.5);
\draw(2,1.5) to (2,-2.5);
\draw(3,1.5) to (3,-2.5);
\draw(3.5,1.5) to (-0.5,-2.5);
\draw(3.5,0.5) to (0.5,-2.5);
\draw(3.5,-1.5) to (2.5,-2.5);
\draw(3.5,-0.5) to (1.5,-2.5);
\draw(2.5,1.5) to (-0.5,-1.5);
\draw(1.5,1.5) to (-0.5,-0.5);
\draw(0.5,1.5) to (-0.5,0.5);
\end{tikzpicture}}
\end{figure}
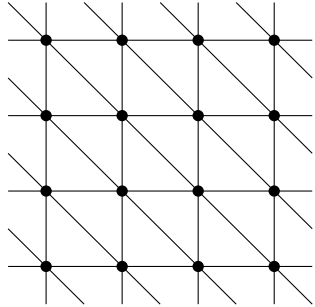
A triangulation of $S$ consists of three arcs. Hereinafter, a set of arcs constructing a triangulation is referred to as a \emph{triangulation} simply. In this paper, we define a triangulation as an ordered set. Let $L=(\ell_1,\ell_2,\ell_3)$ be a triangulation. From symmetry, we can assume without loss of generality that $\ell_1$ is a horizontal line, $\ell_2$ is a vertical line, and $\ell_3$ is a diagonal line in Figure \ref{fig:triangulation}. For $k\in\{1,2,3\}$, we define the \emph{flip} $\varphi_k(L)$ of $L$ in direction $k$ as the operation of exchanging $\ell_k$ from $L$ with another arc to obtain another triangle. Figure \ref{fig:flip} shows the triangulations of $S$ flipped from $L$ in directions 1,2, and 3, respectively.
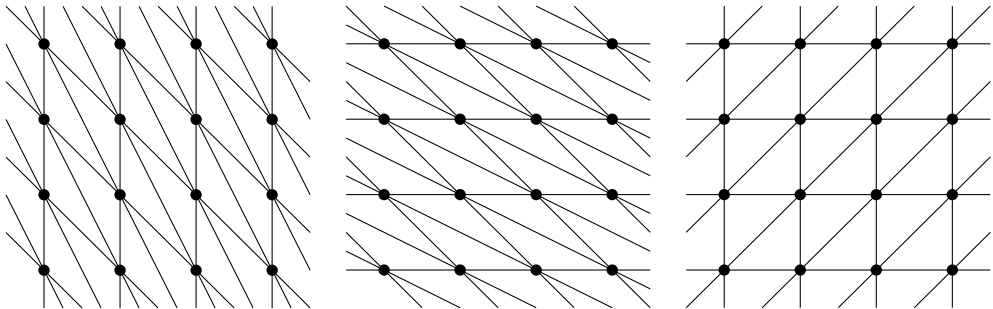
\begin{figure}[ht]
\caption{Flipped triangulation of one-punctured torus\label{fig:flip}}
\vspace{3mm}
\rotatebox{90}{\begin{tikzpicture}
\node[circle,fill=black,inner sep=1.5](1) at (0,0){}; 
\node[circle,fill=black,inner sep=1.5](2) at (0,-1){}; 
\node[circle,fill=black,inner sep=1.5](3) at (0,-2){}; 
\node[circle,fill=black,inner sep=1.5](5) at (1,0){}; 
\node[circle,fill=black,inner sep=1.5](6) at (1,-1){}; 
\node[circle,fill=black,inner sep=1.5](7) at (1,-2){}; 
\node[circle,fill=black,inner sep=1.5](8) at (2,-1){}; 
\node[circle,fill=black,inner sep=1.5](9) at (2,0){};
\node[circle,fill=black,inner sep=1.5](10) at (2,1){};  
\node[circle,fill=black,inner sep=1.5](11) at (1,1){}; 
\node[circle,fill=black,inner sep=1.5](12) at (0,1){}; 
\node[circle,fill=black,inner sep=1.5](13) at (2,-2){}; 
\node[circle,fill=black,inner sep=1.5](14) at (3,1){}; 
\node[circle,fill=black,inner sep=1.5](15) at (3,0){}; 
\node[circle,fill=black,inner sep=1.5](16) at (3,-1){}; 
\node[circle,fill=black,inner sep=1.5](17) at (3,-2){}; 
\draw(-0.5,1) to (3.5,1);
\draw(-0.5,0) to (3.5,0);
\draw(-0.5,-1) to (3.5,-1);
\draw(-0.5,-2) to (3.5,-2);
\draw(3.5,1.5) to (-0.5,-2.5);
\draw(3.5,0.5) to (0.5,-2.5);
\draw(3.5,-1.5) to (2.5,-2.5);
\draw(3.5,-0.5) to (1.5,-2.5);
\draw(2.5,1.5) to (-0.5,-1.5);
\draw(1.5,1.5) to (-0.5,-0.5);
\draw(0.5,1.5) to (-0.5,0.5);
\draw(-0.5,0.75) to (1,1.5);
\draw(-0.5,0.25) to (2,1.5);
\draw(-0.5,-0.25) to (3,1.5);
\draw(-0.5,-0.75) to (3.5,1.25);
\draw(-0.5,-1.25) to (3.5,0.75);
\draw(-0.5,-1.75) to (3.5,0.25);
\draw(-0.5,-2.25) to (3.5,-0.25);
\draw(0,-2.5) to (3.5,-0.75);
\draw(1,-2.5) to (3.5,-1.25);
\draw(2,-2.5) to (3.5,-1.75);
\end{tikzpicture}}
\hspace{2mm}
\reflectbox{\begin{tikzpicture}
\node[circle,fill=black,inner sep=1.5](1) at (0,0){}; 
\node[circle,fill=black,inner sep=1.5](2) at (0,-1){}; 
\node[circle,fill=black,inner sep=1.5](3) at (0,-2){}; 
\node[circle,fill=black,inner sep=1.5](5) at (1,0){}; 
\node[circle,fill=black,inner sep=1.5](6) at (1,-1){}; 
\node[circle,fill=black,inner sep=1.5](7) at (1,-2){}; 
\node[circle,fill=black,inner sep=1.5](8) at (2,-1){}; 
\node[circle,fill=black,inner sep=1.5](9) at (2,0){};
\node[circle,fill=black,inner sep=1.5](10) at (2,1){};  
\node[circle,fill=black,inner sep=1.5](11) at (1,1){}; 
\node[circle,fill=black,inner sep=1.5](12) at (0,1){}; 
\node[circle,fill=black,inner sep=1.5](13) at (2,-2){}; 
\node[circle,fill=black,inner sep=1.5](14) at (3,1){}; 
\node[circle,fill=black,inner sep=1.5](15) at (3,0){}; 
\node[circle,fill=black,inner sep=1.5](16) at (3,-1){}; 
\node[circle,fill=black,inner sep=1.5](17) at (3,-2){}; 
\draw(-0.5,1) to (3.5,1);
\draw(-0.5,0) to (3.5,0);
\draw(-0.5,-1) to (3.5,-1);
\draw(-0.5,-2) to (3.5,-2);
\draw(3.5,1.5) to (-0.5,-2.5);
\draw(3.5,0.5) to (0.5,-2.5);
\draw(3.5,-1.5) to (2.5,-2.5);
\draw(3.5,-0.5) to (1.5,-2.5);
\draw(2.5,1.5) to (-0.5,-1.5);
\draw(1.5,1.5) to (-0.5,-0.5);
\draw(0.5,1.5) to (-0.5,0.5);
\draw(-0.5,0.75) to (1,1.5);
\draw(-0.5,0.25) to (2,1.5);
\draw(-0.5,-0.25) to (3,1.5);
\draw(-0.5,-0.75) to (3.5,1.25);
\draw(-0.5,-1.25) to (3.5,0.75);
\draw(-0.5,-1.75) to (3.5,0.25);
\draw(-0.5,-2.25) to (3.5,-0.25);
\draw(0,-2.5) to (3.5,-0.75);
\draw(1,-2.5) to (3.5,-1.25);
\draw(2,-2.5) to (3.5,-1.75);
\end{tikzpicture}}
\hspace{2mm}
\begin{tikzpicture}
\node[circle,fill=black,inner sep=1.5](1) at (0,0){}; 
\node[circle,fill=black,inner sep=1.5](2) at (0,-1){}; 
\node[circle,fill=black,inner sep=1.5](3) at (0,-2){}; 
\node[circle,fill=black,inner sep=1.5](5) at (1,0){}; 
\node[circle,fill=black,inner sep=1.5](6) at (1,-1){}; 
\node[circle,fill=black,inner sep=1.5](7) at (1,-2){}; 
\node[circle,fill=black,inner sep=1.5](8) at (2,-1){}; 
\node[circle,fill=black,inner sep=1.5](9) at (2,0){};
\node[circle,fill=black,inner sep=1.5](10) at (2,1){};  
\node[circle,fill=black,inner sep=1.5](11) at (1,1){}; 
\node[circle,fill=black,inner sep=1.5](12) at (0,1){}; 
\node[circle,fill=black,inner sep=1.5](13) at (2,-2){}; 
\node[circle,fill=black,inner sep=1.5](14) at (3,1){}; 
\node[circle,fill=black,inner sep=1.5](15) at (3,0){}; 
\node[circle,fill=black,inner sep=1.5](16) at (3,-1){}; 
\node[circle,fill=black,inner sep=1.5](17) at (3,-2){}; 
\draw(-0.5,1) to (3.5,1);
\draw(-0.5,0) to (3.5,0);
\draw(-0.5,-1) to (3.5,-1);
\draw(-0.5,-2) to (3.5,-2);
\draw(0,1.5) to (0,-2.5);
\draw(1,1.5) to (1,-2.5);
\draw(2,1.5) to (2,-2.5);
\draw(3,1.5) to (3,-2.5);
\draw(3.5,1.5) to (-0.5,-2.5);
\draw(3.5,0.5) to (0.5,-2.5);
\draw(3.5,-1.5) to (2.5,-2.5);
\draw(3.5,-0.5) to (1.5,-2.5);
\draw(2.5,1.5) to (-0.5,-1.5);
\draw(1.5,1.5) to (-0.5,-0.5);
\draw(0.5,1.5) to (-0.5,0.5);
\end{tikzpicture}
\end{figure}
Let $\mathbb{T}_3$ be the \emph{$3$-regular tree} whose edges are labeled by the numbers 1,2,3 such that the three edges emanating from each vertex have different labels. We use the notation $\begin{xy}(0,0)*+{t}="A",(10,0)*+{t'}="B",\ar@{-}^k"A";"B" \end{xy}$
to indicate that vertices $t,t'\in \mathbb{T}_3$ are joined by an edge labeled by $k$. We fix an arbitrary vertex $t_0\in \TT_3$, which is called the \emph{rooted vertex}, and a triangulation $L$. A \emph{cluster pattern} with the initial triangulation $L$ is an assignment of a triangulation $L_t=(\ell_{1;t},\ell_{2;t},\ell_{3;t})$ to every vertex $t\in \mathbb{T}_n$ such that $L$ is assigned $t_0$ and triangulations $L_t$ and $L_{t'}$ assigned to the endpoints of any edge
$\begin{xy}(0,0)*+{t}="A",(10,0)*+{t'}="B",\ar@{-}^k"A";"B" \end{xy}$
are obtained from each other by a flip in direction $k$. We denote this assignment by $P_L \colon t\mapsto L_t$. For a cluster pattern $P_L$, when $\ell_{i;t}$ intersects with $\ell_1,\ell_2$ and $\ell_3$ at least $d_{i1},d_{i2}$ and $d_{i3}$ times on $S\setminus\{p\}$ respectively, we define the \emph{intersection vector} $D(L,\ell_{i;t})$ associated with $\ell_{i;t}$ as 
\begin{align*}
    D(L,\ell_{i;t})=\begin{bmatrix} d_{i1}\\d_{i2}\\d_{i3}\end{bmatrix}.
\end{align*}    
We regard the intersection number of the same arc as $-1$.
Furthermore, we define the \emph{intersection matrix} $D(L,L_t)$ associated with $L_t$ as 
\begin{align*}
    D(L,L_{t})=\begin{bmatrix} d_{11}&d_{12}&d_{13}\\d_{21}&d_{22}&d_{23}\\d_{31}&d_{32}&d_{33}\end{bmatrix}.
\end{align*} 
\begin{remark}
In the context of cluster algebra theory, it is known that intersection vectors and intersection matrices correspond with $d$-vectors and $D$-matrices introduced by \cites{fziv,rs}. See \cite{fst}. 
\end{remark}
\section{Intersection vector tree and Stern-Brocot tree}
Let $t_1$ be a vertex of $\TT_3$ connected with $t_0$ by an edge labeled 3, and we consider a full subtree whose vertex set is the union of $t_0$ and all vertices that are reachable to $t_1$ without going through $t_0$:
\vspace{5mm}
\begin{align}\label{subtree}
\begin{xy}(20,0)*+{t_0}="1",(40,0)*+{t_1}="2",(55,16)*+{t_2}="4",(55,-16)*+{t_3}="5", 
(75,24)*+{t_4}="6",(75,8)*+{t_5}="7",(75,-8)*+{t_6}="8",(110,28)*+{t_8\cdots}="10",(110,20)*+{t_{9}\cdots}="11",(110,12)*+{t_{10}\cdots}="12",(110,4)*+{t_{11}\cdots}="13",(110,-4)*+{t_{12}\cdots}="14",(110,-12)*+{t_{13}\cdots}="15",(110,-20)*+{t_{14}\cdots}="16",(110,-28)*+{t_{15}\cdots}="17",
(75,-24)*+{t_7}="9", \ar@{-}^{1}"2";"4"\ar@{-}_{2}"2";"5"\ar@{-}^{3}"4";"6"\ar@{-}_{2}"4";"7"\ar@{-}^{1}"5";"8"\ar@{-}_{3}"5";"9"\ar@{-}^{3}"1";"2"\ar@{-}^{1}"6";"10"\ar@{-}_{2}"6";"11"\ar@{-}^{3}"7";"12"\ar@{-}_{1}"7";"13"\ar@{-}^{2}"8";"14"\ar@{-}_{3}"8";"15"\ar@{-}^{1}"9";"16"\ar@{-}_{2}"9";"17"
\end{xy}.
\end{align}
 Let $\TT'_3$ be a full subtree of the above tree whose vertex set consists of all vertices of the above except for $t_0$.
We correspond the intersection vectors to vertices of $\TT'_3$ as 
\begin{align}\label{F-tree}
\begin{xy}(40,0)*+{D(L,\ell_{3;t_1})}="2",(55,16)*+{D(L,\ell_{1;t_2})}="4",(55,-16)*+{D(L,\ell_{2;t_3})}="5", 
(77.5,24)*+{D(L,\ell_{2;t_4})}="6",(77.5,8)*+{D(L,\ell_{3;t_5})}="7",(77.5,-8)*+{D(L,\ell_{1;t_6})}="8",(110,28)*+{D(L,\ell_{1;t_8})\cdots}="10",(110,20)*+{D(L,\ell_{3;t_{9}})\cdots}="11",(110,12)*+{D(L,\ell_{1;t_{10}})\cdots}="12",(110,4)*+{D(L,\ell_{2;t_{11}})\cdots}="13",(110,-4)*+{D(L,\ell_{2;t_{12}})\cdots}="14",(110,-12)*+{D(L,\ell_{3;t_{13}})\cdots}="15",(110,-20)*+{D(L,\ell_{1;t_{14}})\cdots}="16",(110,-28)*+{D(L,\ell_{2;t_{15}})\cdots}="17",
(77.5,-24)*+{D(L,\ell_{3;t_7})}="9", \ar@{-}^{1}"2";"4"\ar@{-}_{2}"2";"5"\ar@{-}^{3}"4";"6"\ar@{-}_{2}"4";"7"\ar@{-}^{1}"5";"8"\ar@{-}_{3}"5";"9"\ar@{-}^{1}"6";"10"\ar@{-}_{2}"6";"11"\ar@{-}^{3}"7";"12"\ar@{-}_{1}"7";"13"\ar@{-}^{2}"8";"14"\ar@{-}_{3}"8";"15"\ar@{-}^{1}"9";"16"\ar@{-}_{2}"9";"17"
\end{xy}.
\end{align}
That is, if an edge labeled $k$ emanates from the left side of $t_i$ in \eqref{subtree}, we assign ${D(L,\ell_{k;t_i})}$ to $t_i$. Its first seven
vertices are as follows:
\begin{align*}
\begin{xy}(40,0)*+{\begin{bmatrix}0\\0\\1\end{bmatrix}}="2",(55,16)*+{\begin{bmatrix}1\\0\\2\end{bmatrix}}="4",(55,-16)*+{\begin{bmatrix}0\\1\\2\end{bmatrix}}="5", 
(75,24)*+{\begin{bmatrix}2\\0\\3\end{bmatrix}}="6",(75,8)*+{\begin{bmatrix}2\\1\\4\end{bmatrix}}="7",(75,-8)*+{\begin{bmatrix}1\\2\\4\end{bmatrix}}="8",(75,-24)*+{\begin{bmatrix}0\\2\\3\end{bmatrix}}="9", \ar@{-}^{1}"2";"4"\ar@{-}_{2}"2";"5"\ar@{-}^{3}"4";"6"\ar@{-}_{2}"4";"7"\ar@{-}^{1}"5";"8"\ar@{-}_{3}"5";"9"
\end{xy}.
\end{align*}
  We denote this tree by $\mathrm{Tree}(D)$, and we call $\text{Tree}(D)$ the \emph{intersection vector tree}. 
In this section, we prove the following theorem:
\begin{theorem}\label{thm:main1}
We consider a map
\begin{align*}
    g\colon\ZZ_{\geq0}^3 \to \QQ,\quad \begin{bmatrix}d_1\\d_2\\d_3\end{bmatrix}\mapsto \dfrac{d_1+1}{d_2+1}.
\end{align*}
The Stern-Brocot tree is obtained by replacing each vertex $v$ of $\mathrm{Tree}(D)$ with $g(v)$.
\end{theorem}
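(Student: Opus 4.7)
The plan is to lift the problem to intersection matrices and then match the resulting structure, vertex by vertex, to the Farey triple tree of the Remark. At each vertex $t$ of the subtree \eqref{subtree}, record the full intersection matrix $M_t := D(L,L_t)$ whose $i$-th column is $D(L,\ell_{i;t})$, and set
\begin{align*}
G(M_t) \;:=\; \bigl(g(D(L,\ell_{1;t})),\; g(D(L,\ell_{2;t})),\; g(D(L,\ell_{3;t}))\bigr),
\end{align*}
where $g$ is extended to coordinates $-1$ by the same formula, so that $g((-1,0,0)^T)=0/1$, $g((0,-1,0)^T)=1/0$, $g((0,0,-1)^T)=1/1$. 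By the construction of $\mathrm{Tree}(D)$, the fraction that $\mathrm{Tree}(D)$ assigns to $t$ is the $k$-th coordinate of $G(M_t)$, where $k$ is the label of the edge joining $t$ to its parent in \eqref{subtree} (i.e.\ the column just mutated by the last flip). It therefore suffices to prove: (a) every $G(M_t)$ is a vertex of the Farey triple tree; (b) the flips along \eqref{subtree} correspond, under $G$, to the child-generation rules of the Farey triple tree; and (c) in each case the $k$-th coordinate is the newly inserted (second-largest) fraction. Together with the Remark these conclude the theorem.

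For the base case, direct mutation of the initial $d$-vectors $(-1,0,0)^T,(0,-1,0)^T,(0,0,-1)^T$ in direction $3$ gives the columns $(-1,0,0)^T,(0,-1,0)^T,(0,0,1)^T$ of $M_{t_1}$, so $G(M_{t_1})=(0/1,1/0,1/1)$, the root of the Farey triple tree. For the inductive step, I use the standard $d$-vector mutation rule: because every $B$-matrix encountered along \eqref{subtree} has off-diagonal entries $\pm 2$, a flip at $t$ in direction $k$ produces the new $k$-th column
\begin{align*}
v_k' \;=\; -v_k \;+\; \max\bigl(2v_i,\; 2v_j\bigr) \qquad (\{i,j\}=\{1,2,3\}\setminus\{k\}),
\end{align*}
with the maximum taken componentwise. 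A sign-coherence claim, carried along with the induction, asserts that one of $2v_i,2v_j$ dominates componentwise and that the dominant index is precisely the index of the second-largest fraction of $G(M_t)$. With the maximum so resolved, a case-by-case check against the three Farey cases (i)--(iii) of the Remark shows that $g(v_k')$ equals the mediant of the two untouched fractions $g(v_i)$ and $g(v_j)$, matching exactly the Farey triple child rule. Since a mediant of two distinct positive fractions lies strictly between them, $g(v_k')$ is automatically the second-largest entry of the updated triple $G(M_{t'})$, securing (c).

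Applying the Remark then produces the Stern-Brocot tree, completing the proof. I expect the main technical obstacle to be the sign-coherence claim in the inductive step: one must maintain an explicit linear relation among the columns $v_1,v_2,v_3$ (with signs tracked by which Farey case is in force at $t$) in order to resolve the componentwise $\max$ cleanly, so that $g(v_k')$ coincides with the mediant on the nose rather than differing from it by an offset introduced by the $+1$'s in the definition of $g$. Establishing and preserving this relation through every flip is where the bulk of the work lies.
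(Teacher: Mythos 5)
Your route is genuinely different from the paper's. The paper never touches the $d$-vector mutation recurrence: it identifies each arc $\ell$ with its \emph{gradient} on the universal cover and proves (Lemma \ref{lem:int-inc}) that $\mathrm{grad}_L(\ell)=a/b$ in lowest terms if and only if $D(L,\ell)=\begin{bmatrix}a-1\\b-1\\a+b-1\end{bmatrix}$; the mediant law and the ``second-largest entry'' bookkeeping then come from elementary plane geometry (Lemmas \ref{lem:incsum} and \ref{lem:flip-gradient}), and the tree of gradient triples is matched to the Farey triple tree exactly as in your steps (a)--(c). You instead work algebraically with the columns of $D(L,L_t)$ and the recurrence $v_k'=-v_k+\max(2v_i,2v_j)$ coming from the Markov $B$-matrix. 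Both reduce to the Farey triple tree via the Remark, so the skeleton is the same; what differs is how the flip is computed. Your approach has the virtue of staying inside cluster combinatorics (no universal cover), but it imports the $d$-vector recurrence with the componentwise $\max$ as an external input, whereas the paper's geometric model makes every step self-contained.

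The one genuine gap is the ``sign-coherence claim'' you yourself flag and defer: that one of $2v_i,2v_j$ dominates componentwise and that the winner is the column of the second-largest fraction. As written, your argument is a plan rather than a proof, because without resolving the $\max$ you cannot perform the case check, and the mediant identity for $g(v_k')$ (with its $+1$ offsets) depends on it. The claim is true, and the clean way to close the gap is to strengthen your induction hypothesis to precisely the normal form of Lemma \ref{lem:int-inc}: every column equals $\begin{bmatrix}a-1\\b-1\\a+b-1\end{bmatrix}$ for an irreducible $a/b\ge 0$, and the three slopes form a Farey triple with the mediant in the position named by the edge back to the parent. Granting this, if $i$ is the mediant index with slope $(c+e)/(d+f)$ and $j,k$ carry $c/d$ and $e/f$, then $v_i-v_j=\begin{bmatrix}e\\f\\e+f\end{bmatrix}\ge 0$, so the $\max$ resolves to $2v_i$, and $-v_k+2v_i=\begin{bmatrix}2c+e-1\\2d+f-1\\2c+2d+e+f-1\end{bmatrix}$ is again in normal form with slope $(2c+e)/(2d+f)$, the mediant of $(c+e)/(d+f)$ and $c/d$ --- exactly the Farey child rule, and the new triple again satisfies the hypothesis. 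With that hypothesis stated and propagated (it also disposes of your worry that the $+1$'s introduce an offset), your proof is complete; without it, the central step is only asserted.
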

For $k\in \{1,2,3\}$, we define the \emph{intersection matrix flip} $\Phi_k$ of $D(L,L_t)$ in direction $k$ as 
\begin{align}
\Phi_k(D(L,L_t))=D(L,\varphi_k(L_t)).
\end{align}
By regarding punctures on the universal cover of $S$ as lattice points on $\RR^2$ with the coordinate axis $\ell_1=\ell_{1;t_0}$ and $\ell_2=\ell_{2;t_0}$, we consider the \emph{gradient} of arcs of $L_t$. We denote the gradient of $\ell$ by $\mathrm{grad}_L(\ell)$. We assume $\mathrm{grad}_L(\ell_{1})=0,\mathrm{grad}_L(\ell_{2})=\infty,\mathrm{grad}_L(\ell_{3})=-1$. 
\begin{example}
We consider an arc $\ell$ in Figure \ref{example}. Then we have 
\begin{align*}
\mathrm{grad}_L(\ell)=\dfrac{3}{2},\quad \text{and} \quad D(L,\ell)=\begin{bmatrix}2\\1\\4\end{bmatrix}.
\end{align*}
\begin{figure}[ht]
    \caption{Arc $\ell$}
    \label{example}
    \vspace{4mm}
\scalebox{0.65}{\begin{tikzpicture}
\node[circle,fill=black,inner sep=2.7](1) at (0,0){}; 
\node[circle,fill=black,inner sep=2.7](2) at (0,-2){}; 
\node[circle,fill=black,inner sep=2.7](3) at (0,-4){}; 
\node[circle,fill=black,inner sep=2.7](5) at (2,0){}; 
\node[circle,fill=black,inner sep=2.7](6) at (2,-2){}; 
\node[circle,fill=black,inner sep=2.7](7) at (2,-4){}; 
\node[circle,fill=black,inner sep=2.7](8) at (4,-2){}; 
\node[circle,fill=black,inner sep=2.7](9) at (4,0){};
\node[circle,fill=black,inner sep=2.7](10) at (4,2){};  
\node[circle,fill=black,inner sep=2.7](11) at (2,2){};  
\draw(1) to (5);
\draw(1) to (3);
\draw(11) to (7);
\draw(3) to (7);
\draw(2) to (6);
\draw(5) to (9);
\draw(6) to (8);
\draw(10) to (8);
\draw(11) to (10);
\draw(1) to (6);
\draw(2) to (7);
\draw(5) to (8);
\draw(11) to (9);
\draw[ultra thick, blue](3) to (10);
\end{tikzpicture}}
\end{figure}
\end{example}
\begin{definition}
For $q\in\QQ\cup\{\infty\}$, if $n(q),d(q)\in\ZZ$ satisfy the following conditions, we say that $\dfrac{n(q)}{d(q)}$ is the \emph{reduced expression} of $q$: 
\begin{itemize}
    \item $q=\dfrac{n(q)}{d(q)}$,
    \vspace{2mm}
    \item $\gcd (n(q),d(q))=1$,
    \vspace{2mm}
    \item $d(q)\geq 0$.
\end{itemize}
Moreover, for a fraction $\dfrac{a}{b}$, if there exists $q\in\QQ\cup\{\infty\}$ such that $\dfrac{a}{b}$ is the reduced expression of $q$, then we say that $\dfrac{a}{b}$ is \emph{irreducible}.
\end{definition}
This expression is determined uniquely. In particular, $\dfrac{0}{1},\dfrac{1}{0}$ are reduced expressions of $0,\infty$ respectively.
\begin{lemma}\label{lem:incsum}
   Let $M=(m_{1},m_{2},m_{3})$ be a triangulation and $\{i,j,k\}=\{1,2,3\}$. The following two conditions are equivalent: 
   \begin{itemize}
   \item[(1)] Either of the following two inequalities holds:
   \begin{align}\label{eq:small-large-assumption}
   \mathrm{grad}_L(m_{i})<\mathrm{grad}_L(m_{j})<\mathrm{grad}_L(m_{k})\quad \text{or}\quad \mathrm{grad}_L(m_{k})<\mathrm{grad}_L(m_{j})<\mathrm{grad}_L(m_{i}).
    \end{align}
  \item[(2)] If $\mathrm{grad}_L(m_{i})=\dfrac{a}{b}$ and $\mathrm{grad}_L(m_{k})=\dfrac{c}{d}$, and they are irreducible fractions, then $\mathrm{grad}_L(m_{j})=\dfrac{a+c}{b+d}$.
   \end{itemize}
  In particular, for any triangulation $M=(m_{1},m_{2},m_{3})$, there exist $a,c\in\ZZ$ and $b,d\in\ZZ_{\geq0}$ such that $\dfrac{a}{b}$ and $\dfrac{c}{d}$ are irreducible and $\left\{\mathrm{grad}_L(m_{1}),\mathrm{grad}_L(m_{2}),\mathrm{grad}_L({m_{3}})\right\}=\left\{\dfrac{a}{b},\dfrac{c}{d},\dfrac{a+c}{b+d}\right\}$.   
\end{lemma}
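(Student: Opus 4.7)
My plan is to use the standard identification of arcs on the once-punctured torus with primitive integer vectors in $\ZZ^2$ modulo sign, visible in Figure \ref{fig:triangulation}: taking $\ell_1$ and $\ell_2$ as the horizontal and vertical coordinate axes of $\RR^2$ and the puncture lifting to the lattice $\ZZ^2$, each arc $m$ lifts to a straight segment from $0$ to a primitive vector $v(m) \in \ZZ^2$, and $\mathrm{grad}_L(m)$ is precisely the slope $y/x$ of $v(m) = (x, y)$. Two arcs $m, m'$ are disjoint on $S \setminus \{p\}$ exactly when $|\det(v(m), v(m'))| \leq 1$, so a triangulation $M = (m_1, m_2, m_3)$ corresponds to a triple of primitive vectors pairwise forming a $\ZZ$-basis of $\ZZ^2$. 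Any such triple has, after a suitable reordering and choice of signs, the relation $v_j = v_i + v_k$: once any two of the vectors are taken as a basis, the third is an integer combination with coefficients $\pm 1$ by the determinant condition.

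Given this dictionary, the lemma reduces to a direct mediant calculation. Writing $v_i = \pm(b, a)$ and $v_k = \pm(d, c)$ with $\gcd(a,b) = \gcd(c,d) = 1$ and $b, d \geq 0$, one has $\mathrm{grad}_L(m_i) = a/b$ and $\mathrm{grad}_L(m_k) = c/d$ as the reduced expressions. Relabeling so that $v_j = v_i + v_k = (b+d, a+c)$, we have $\mathrm{grad}_L(m_j) = (a+c)/(b+d)$, and this fraction is automatically irreducible because $\gcd(a+c, b+d)$ divides $b(a+c) - a(b+d) = bc - ad = \pm\det(v_i, v_k) = \pm 1$. Assuming WLOG $a/b < c/d$, so $bc - ad = 1$, a short computation gives
\[
\frac{a+c}{b+d} - \frac{a}{b} \;=\; \frac{1}{b(b+d)} > 0, \qquad \frac{c}{d} - \frac{a+c}{b+d} \;=\; \frac{1}{d(b+d)} > 0,
\]
placing the mediant strictly between the two extremes. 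This establishes the ``in particular'' statement, and the equivalence of (1) and (2) is then immediate: inside a Farey triple $\{a/b,\, c/d,\, (a+c)/(b+d)\}$ the mediant is the unique element lying strictly between the other two, so (1) and (2) both single out the same arc $m_j$.

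The main obstacle is establishing the dictionary cleanly, and in particular the claim that in any triangulation some primitive vector is $\pm$(sum of the other two). A clean argument uses the transitivity of $SL_2(\ZZ)$ on $\ZZ$-bases: after a change of coordinates we may assume $v_i = (1, 0)$ and $v_k = (0, 1)$, and then the only primitive vectors $v_j$ with $|\det(v_j, v_i)| = |\det(v_j, v_k)| = 1$ are $(\pm 1, \pm 1) = \pm v_i \pm v_k$, so the relation always holds. The other foundational facts—that two arcs meet $|\det(v, w)|$ times on $S \setminus \{p\}$, and that a triangulation of the once-punctured torus consists of exactly three arcs—are standard and visible in the universal cover.
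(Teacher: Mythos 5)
Your proof is correct and follows essentially the same route as the paper's: both lift the triangulation to the universal cover, identify arcs with primitive lattice vectors based at the puncture, and observe that the middle-slope arc's endpoint is the coordinatewise sum of the endpoints of the other two, so that its gradient is the mediant. You simply supply more detail than the paper does --- deriving the sum relation from unimodularity and $SL_2(\ZZ)$-transitivity rather than reading it off the figure of the fundamental parallelogram, computing the mediant inequalities explicitly where the paper declares the converse implication clear, and folding in the irreducibility of the mediant that the paper defers to a separate remark.
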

\begin{proof}
We prove that (1) implies (2). Since $M$ is a triangulation, $m_{i},m_{j},m_{k}$ are as in Figure \ref{fig:tri123} on the universal covering of $S$. When the coordinate of a point shared by 3 arcs is $(0,0)$, the coordinate of the other endpoint of $m_{i}$ is $(b,a)$, and that of $m_{k}$ is $(d,c)$. Therefore, that of $m_{j}$ is $(b+d,a+c)$ and $\mathrm{grad}_L(m_{j})=\dfrac{a+c}{b+d}$. It is clear that (2) implies (1).    
    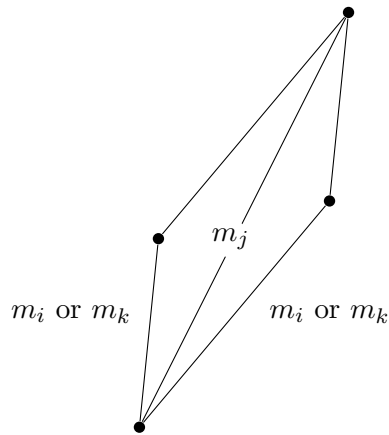
\begin{figure}[ht]
        \caption{Triangulation under the assumption \eqref{eq:small-large-assumption}}
        \label{fig:tri123}
        \vspace{3mm}
\begin{tikzpicture}
\node[circle,fill=black,inner sep=1.5](1) at (0,0){}; 
\node[circle,fill=black,inner sep=1.5](2) at (0.25,2.5){}; 
\node[circle,fill=black,inner sep=1.5](3) at (2.5,3){}; 
\node[circle,fill=black,inner sep=1.5](4) at (2.75,5.5){}; 
\draw (1) to (3);
\draw (3) to (4);
\draw (2) to (4);
\draw (4) to (1);
\draw (1) to (2);
\node[circle,fill=white,inner sep=5](8) at (1.2,2.5){}; 
\node(5) at (2.5,1.5){$m_{i}$ or $m_{k}$};
\node(6) at (-0.9,1.5){$m_{i}$ or $m_{k}$};
\node(7) at (1.2,2.5){$m_{j}$};
\end{tikzpicture}
    \end{figure}
\end{proof}
\begin{remark}\label{irreduciblesum} We note that if (2) in Lamma \ref{lem:incsum} holds, then $\mathrm{grad}_L(m_{j})=\dfrac{a+c}{b+d}$ is irreducible. This fact is shown in the following way: we assume that $\dfrac{a+c}{b+d}$ is not irreducible. Then $m_j$ passes through a lattice point in the section between $(0,0)$ and $(a+c,b+d)$. Since all lattice points are a point on $S$, $m_{i}$ intersects with $m_{j}$ at non-lattice points. This conflicts that $M$ is a triangulation.
Moreover, if $\{\mathrm{grad}_L(m_{1}),\mathrm{grad}_L(m_{2}),\mathrm{grad}_L(m_{3})\}=\left\{\dfrac{a}{b},\dfrac{c}{d},\dfrac{c-a}{d-b}\right\}$ and $\dfrac{a}{b}, \dfrac{c}{d}$ are irreducible, then the reduced expression of $\dfrac{c-a}{d-b}$ is $\dfrac{c-a}{d-b}$ or $\dfrac{a-c}{b-d}$. This fact is proved in the same way as the above.
\end{remark}
Using the above lemma, we obtain the inequality between the gradients.
\begin{lemma}\label{lem:flip-gradient}
  Let $t\in \TT_3$,  $L_t=(\ell_{1;t},\ell_{2;t},\ell_{3;t})$ a triangulation and $\{i,j,k\}=\{1,2,3\}$. We assume that
  \begin{align}\label{eq:small-large-assumption2}
   \mathrm{grad}_L(\ell_{i;t})<\mathrm{grad}_L(\ell_{j;t})<\mathrm{grad}_L(\ell_{k;t}).
   \end{align}
  \begin{itemize}
      \item[(1)] Let $L_{t'}=\{\ell_{i;t},\ell_{j;t'},\ell_{k;t}\}$ be a triangulation of $S$ obtained from $L_t$ by a flip in direction $j$. Then we have 
       \begin{align*}
   \mathrm{grad}_L(\ell_{j;t'})<\mathrm{grad}_L(\ell_{i;t})<\mathrm{grad}_L(\ell_{k;t})\quad  \text{or}\quad  \mathrm{grad}_L(\ell_{i;t})<\mathrm{grad}_L(\ell_{k;t})<\mathrm{grad}_L(\ell_{j;t'}),
    \end{align*}
    \item[(2)] Let $L_{t''}=\{\ell_{i;t''},\ell_{j;t},\ell_{k;t}\}$ be a triangulation of $S$ obtained from $L_t$ by a flip in direction $i$. Then we have 
       \begin{align*}
   \mathrm{grad}_L(\ell_{j;t})<\mathrm{grad}_L(\ell_{i;t''})<\mathrm{grad}_L(\ell_{k;t})
    \end{align*}
     \item[(3)] Let $L_{t'''}=\{\ell_{i;t},\ell_{j;t},\ell_{k;t'''}\}$ be a triangulation of $S$ obtained from $L_t$ by a flip in direction $k$. Then we have 
       \begin{align*}
   \mathrm{grad}_L(\ell_{i;t})<\mathrm{grad}_L(\ell_{k;t'''})<\mathrm{grad}_L(\ell_{j;t}).
    \end{align*}
  \end{itemize}
\end{lemma}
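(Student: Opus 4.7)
The plan is to compute each flipped arc's gradient explicitly by a diagonal exchange in the universal cover $\RR^2$, using the coordinate description from Lemma \ref{lem:incsum}, and then verify the stated orderings of gradients by elementary algebra. Applying Lemma \ref{lem:incsum} to $L_t$, I write $\mathrm{grad}_L(\ell_{i;t}) = a/b$ and $\mathrm{grad}_L(\ell_{k;t}) = c/d$ in reduced form; then $\mathrm{grad}_L(\ell_{j;t}) = (a+c)/(b+d)$ with $a/b < c/d$. Following the picture in the proof of Lemma \ref{lem:incsum}, I lift one triangle of $L_t$ to $\RR^2$ with vertices $O = (0,0)$, $A = (b,a)$, $B = (b+d,a+c)$, so that $OA$, $OB$, $AB$ are lifts of $\ell_{i;t}$, $\ell_{j;t}$, $\ell_{k;t}$ respectively; the adjacent lifted triangle across $OB$ is then $O, B, C$ with $C = (d,c)$. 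Since the one-punctured torus has only two combinatorial triangles, the lifts adjacent across $OA$ and across $AB$ are translates of $OBC$: translating by $-C$ gives $-C, O, A$ on the opposite side of $OA$ from $B$, and translating by $+A$ gives $A, B, A+B$ on the opposite side of $AB$ from $O$.

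For each flip, the new arc is the other diagonal of the corresponding parallelogram. Case (1) flipping $\ell_{j;t} = OB$: the parallelogram is $O,A,B,C$ and the other diagonal $AC$ has direction $(d-b,c-a)$, giving $\mathrm{grad}_L(\ell_{j;t'}) = (c-a)/(d-b)$. Case (2) flipping $\ell_{i;t} = OA$: the parallelogram is $-C,O,B,A$ and the other diagonal from $-C$ to $B$ has direction $(b+2d,a+2c)$, giving $\mathrm{grad}_L(\ell_{i;t''}) = (a+2c)/(b+2d)$. Case (3) flipping $\ell_{k;t} = AB$: the parallelogram is $O,A,A+B,B$ and the other diagonal from $O$ to $A+B$ has direction $(2b+d,2a+c)$, giving $\mathrm{grad}_L(\ell_{k;t'''}) = (2a+c)/(2b+d)$.

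The stated inequalities then reduce to the key comparison $ad < bc$ (which holds because $a/b < c/d$). For (2) and (3) the new gradient is sandwiched between the two surviving ones: both $(a+c)/(b+d) - (a+2c)/(b+2d)$ and $(a+2c)/(b+2d) - c/d$ have numerator proportional to $ad-bc$, yielding $(a+c)/(b+d) < (a+2c)/(b+2d) < c/d$, and an analogous computation handles (3). For (1) the numerators of $(c-a)/(d-b) - c/d$ and $(c-a)/(d-b) - a/b$ are both $bc-ad > 0$, while the denominators carry the sign of $d-b$, so $d > b$ gives $(c-a)/(d-b) > c/d$ (the second alternative) and $d < b$ gives $(c-a)/(d-b) < a/b$ (the first alternative); the degenerate case $d = b$ yields $\mathrm{grad}_L(\ell_{j;t'}) = \infty$, which exceeds $c/d$ under the convention $\infty = 1/0$.

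The main obstacle I expect is the geometric identification of the parallelograms in (2) and (3): one must argue carefully that the triangle adjacent to $OAB$ across $OA$ (resp.\ across $AB$) is precisely the claimed translate of $OBC$. This uses the fact that the two combinatorial triangles of the triangulation share each of its three arcs, which forces the translation to be determined by matching the appropriate edge of $OBC$ with the flipped edge. Once these parallelograms are in place, the rest of the argument is routine algebra driven by $ad < bc$.
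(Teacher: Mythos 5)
Your proposal is correct and follows essentially the same route as the paper: both lift the triangulation to the universal cover using the coordinates from Lemma \ref{lem:incsum}, realize the flip as a diagonal exchange in a parallelogram, compute the new gradient (in case (1) both obtain $(c-a)/(d-b)$, with the sign of $d-b$ deciding which alternative holds), and conclude the orderings. The only difference is one of detail: the paper works out case (1) and dismisses (2) and (3) as analogous, deducing the ordering from the mediant property in Lemma \ref{lem:incsum}, whereas you exhibit the explicit gradients $(a+2c)/(b+2d)$ and $(2a+c)/(2b+d)$ (which are exactly the relevant mediants) and verify the inequalities by direct cross-multiplication.
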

\begin{proof}
We prove (1). By flipping the triangulation in Figure \ref{fig:tri123} in direction $j$, we have a triangulation in Figure \ref{fig:tri123flip}.
  \begin{figure}[ht]
        \caption{Flipped triangulation}
        \label{fig:tri123flip}
        \vspace{3mm}
\begin{tikzpicture}
\node[circle,fill=black,inner sep=1.5](1) at (0,0){}; 
\node[circle,fill=black,inner sep=1.5](2) at (0.25,2.5){}; 
\node[circle,fill=black,inner sep=1.5](3) at (2.5,3){}; 
\node[circle,fill=black,inner sep=1.5](4) at (2.75,5.5){}; 
\draw (1) to (3);
\draw (3) to (4);
\draw (2) to (4);
\draw (2) to (3);
\draw (1) to (2);
\node[circle,fill=white,inner sep=6](8) at (1.2,2.7){}; 
\node(5) at (2,1.5){$m_{i}$};
\node(6) at (-0.4,1.5){$m_{k}$};
\node(7) at (1.2,2.7){$m'_{j}$};
\end{tikzpicture}
    \end{figure}
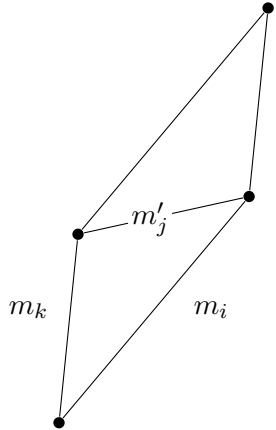
If $\mathrm{grad}_L(\ell_{i;t})=\dfrac{a}{b}$ and $\mathrm{grad}_L(\ell_{k;t})=\dfrac{c}{d}$, then $\mathrm{grad}_L(\ell_{j;t})=\dfrac{c-a}{d-b}$. If the reduced expression of $\mathrm{grad}_L(\ell_{j;t})$ is $\dfrac{c-a}{d-b}$, then by Lemma \ref{lem:incsum} and \eqref{eq:small-large-assumption2}, we have $\mathrm{grad}_L(\ell_{i;t})<\mathrm{grad}_L(\ell_{k;t})<\mathrm{grad}_L(\ell_{j;t'})$. On the other hand, if the reduced expression is $\dfrac{a-c}{b-d}$, then we have $\mathrm{grad}_L(\ell_{j;t'})<\mathrm{grad}_L(\ell_{i;t})<\mathrm{grad}_L(\ell_{k;t})$. The case (2) and (3) are also proved in the same way.
\end{proof}
We note that if $t\in\TT'_3$, then the gradients of arcs of $L_t$ are 0 or more by Lemma \ref{lem:flip-gradient}. Let us consider relation between the gradient and the intersection vector of an arc $\ell$ of $L_t$. The following fact is useful:
\begin{lemma}\label{lem:int-inc}
     Let $\ell\in L_t$ be an edge satisfying $t\in \TT'_3$. Then, $\mathrm{grad}_L(\ell)=\dfrac{a}{b}$ holds and $\dfrac{a}{b}$ is irreducible if and only if $D(L,\ell)=\begin{bmatrix}a-1\\b-1\\a+b-1\end{bmatrix}$ holds.
\end{lemma}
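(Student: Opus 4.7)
The plan is to prove the forward implication by a direct geometric count on the universal cover, and then obtain the converse as a trivial consequence.

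First I would establish that for every $\ell \in L_t$ with $t \in \TT'_3$, the gradient $\mathrm{grad}_L(\ell)$ is a non-negative irreducible fraction. Non-negativity follows from Lemma \ref{lem:flip-gradient} by an easy induction on the distance of $t$ from $t_1$ in $\TT'_3$: at $t_1$ the three gradients are $0,\ 1,\ \infty$, and the inductive step uses that the ``middle gradient'' direction at $L_t$ coincides with the direction of the edge leading back toward $t_1$, so forward flips fall under cases (2) or (3) of the lemma and never produce negative values. Irreducibility is furnished by Remark \ref{irreduciblesum}. I would also dispose of the two boundary cases explicitly: if $\ell = \ell_1$ then $(a,b) = (0,1)$ and $D(L,\ell) = [-1,0,0]^\top$ by the self-intersection convention, matching the formula; symmetrically $\ell = \ell_2$ gives $(a,b)=(1,0)$ and $D(L,\ell) = [0,-1,0]^\top$.

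For the generic case $a,b \geq 1$ with $\gcd(a,b) = 1$, I would lift $\ell$ to the straight segment from $(0,0)$ to $(b,a)$ in the universal cover $\RR^2$. The coprimality of $(a,b)$ guarantees the open segment avoids $\ZZ^2$, and the flat-metric straight representative realises the geometric intersection number. The preimages of $\ell_1,\ell_2,\ell_3$ are respectively the lines $y=n$, $x=n$, $x+y=n$ for $n\in\ZZ$. The segment meets $y=n$ at $(nb/a,\,n)$ for $n=1,\dots,a-1$; since $\gcd(a,b)=1$ and $0<n<a$ these points are not in $\ZZ^2$, so each crossing is transverse and disjoint from the other two families, yielding $d_1 = a-1$. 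Symmetrically $d_2=b-1$. For the antidiagonal family, the segment meets $x+y=n$ at $(nb/(a+b),\,na/(a+b))$ for $n=1,\dots,a+b-1$; using that $\gcd(a,b)=1$ forces $\gcd(a+b,a)=\gcd(a+b,b)=1$, these are lattice points only when $(a+b)\mid n$, which is excluded. Hence $d_3 = a+b-1$, completing the forward direction.

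Given the forward direction, the converse is essentially free: for $\ell\in L_t$ with $t\in\TT'_3$, the preliminary step guarantees some irreducible gradient $p/q\ge 0$; applying the forward direction produces $[p-1,\,q-1,\,p+q-1]^\top$, and comparing with the given $[a-1,\,b-1,\,a+b-1]^\top$ forces $p=a$, $q=b$. The main obstacle I anticipate is the justification that the geometric intersection number on $S\setminus\{p\}$ coincides with the crossing count of the straight-segment lift — i.e., that the flat-metric representative minimises intersection within its homotopy class on the punctured torus. This is standard (geodesics on the torus are intersection-minimising), but some care is needed near the puncture; an alternative would be a combinatorial induction following the flip structure of $\TT'_3$ that propagates the formula along the tree using Lemma \ref{lem:incsum} and Lemma \ref{lem:flip-gradient}, bypassing the straightening argument entirely.
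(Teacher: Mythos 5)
The paper states Lemma \ref{lem:int-inc} without any proof (it is introduced only as ``the following fact is useful''), so there is no argument of the author's to compare against; your proposal supplies a complete proof where the paper has none, and it is correct. The lattice count is right: parametrizing the lift as $(tb,ta)$, $t\in[0,1]$, the interior crossings with the families $y=n$, $x=n$, $x+y=n$ occur at $n=1,\dots,a-1$, $n=1,\dots,b-1$, $n=1,\dots,a+b-1$ respectively, and coprimality of $a$ and $b$ (hence of $a+b$ with each of $a,b$) ensures none of these crossing points is a lattice point and no crossing point lies on two of the families, giving $[a-1,\,b-1,\,a+b-1]^{\top}$. Your preliminary reduction (non-negativity of gradients on $\TT'_3$ via Lemma \ref{lem:flip-gradient}, irreducibility via Remark \ref{irreduciblesum}) matches what the paper itself asserts just before the lemma, and your handling of the degenerate cases $\ell=\ell_1,\ell_2$ correctly reconciles the $-1$ self-intersection convention used in the introduction with the formula. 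The converse-by-uniqueness step is clean. The one point you rightly flag as needing justification --- that the straight-segment representative realizes the minimal intersection number on $S\setminus\{p\}$ --- is the only substantive gap left open; it follows from the bigon criterion (two distinct straight lines in the universal cover meet at most once, so no bigons, and the endpoint issues are avoided because intersections at the puncture are discarded), and your proposed fallback of propagating the formula along the flip tree via Lemmas \ref{lem:incsum} and \ref{lem:flip-gradient} is exactly the inductive mechanism the paper uses for the neighboring results (Corollary \ref{cor:vi--viii}), so either route closes the argument.
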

\begin{proof}
We assume that $\mathrm{grad}_L(\ell)=\dfrac{a}{b}$ and $\dfrac{a}{b}$ is irreducible. The line segment from a lattice point to the next lattice point of the line corresponding to $\ell$ passes through $a + b-1$ tiles separated by $\ell_1$ and $\ell_2$. In the process, the line segment intersects the horizontal line $a-1$ times and the vertical line $b-1$ times. Moreover, this line segment intersects the line corresponding to $\ell_3$ once for each passing tile. Therefore, we have $D(L,\ell)=\begin{bmatrix}a-1\\b-1\\a+b-1\end{bmatrix}$. Conversely, assume $ D (L, \ell) = \begin {bmatrix} a-1 \\ b-1 \\ a + b-1 \end {bmatrix}$. We have $\mathrm{grad}_L(\ell) = \dfrac{a}{b}$ by tracing the above discussion backwards. We assume that $\dfrac{a}{b}$ is not irreducible. In this case, the entries of the vector $\begin{bmatrix}a-1\\b-1\\a+b-1\end{bmatrix}$ represent the number of intersections of the line segments through the lattice points of the universal covering. This means that we are circling the same line on the torus more than once, which contradicts the definition of the intersection vector.
\end{proof}

\begin{remark}\label{irreducible}
By Lemma \ref{lem:int-inc}, for $\ell\in L_t$ satisfying $t\in \TT'_3$ and $D(L,\ell_t)=\begin{bmatrix}a\\b\\c\end{bmatrix}$, $\dfrac{a+1}{b+1}$ and $\dfrac{b+1}{a+1}$ are irreducible. 
\end{remark}
We define the \emph{non-middle gradient flip} as a flip that removes an arc whose gradient is the smallest or the largest in the three arcs and adds another arc.
We are ready to prove the first main theorem. 
\begin{proof}[Proof of Theorem \ref{thm:main1}]
Let $t\in\TT_3$. For $L_t=(\ell_{1;t},\ell_{2;t},\ell_{3;t})$, we consider the triple 
\begin{align*}
  \mathrm{grad}_L(L_t)=  (\mathrm{grad}_L(\ell_{1;t}),\mathrm{grad}_L(\ell_{2;t}),\mathrm{grad}_L(\ell_{3;t})),
\end{align*}
where all entries are irreducible.
According to Lemma \ref{lem:int-inc}, a restriction of $g$ to $\{D(L,\ell_{i;t})\}_{t\in\TT'_3,i\in\{1,2,3\}}\setminus\{0\}$ is given by $D(L,\ell)\mapsto \text{(the reduced expression of) } \mathrm{grad}_L(\ell)$. Therefore, it suffices to show that a tree 
\begin{align}\label{gradienttree}
\begin{xy}(40,0)*+{\mathrm{grad}_L(\ell_{3;t_1})}="2",(55,16)*+{\mathrm{grad}_L(\ell_{1;t_2})}="4",(55,-16)*+{\mathrm{grad}_L(\ell_{2;t_3})}="5", 
(77.5,24)*+{\mathrm{grad}_L(\ell_{2;t_4})}="6",(77.5,8)*+{\mathrm{grad}_L(\ell_{3;t_5})}="7",(77.5,-8)*+{\mathrm{grad}_L(\ell_{1;t_6})}="8",(110,28)*+{\mathrm{grad}_L(\ell_{1;t_8})\cdots}="10",(110,20)*+{\mathrm{grad}_L(\ell_{3;t_{9}})\cdots}="11",(110,12)*+{\mathrm{grad}_L(\ell_{1;t_{10}})\cdots}="12",(110,4)*+{\mathrm{grad}_L(\ell_{2;t_{11}})\cdots}="13",(110,-4)*+{\mathrm{grad}_L(\ell_{2;t_{12}})\cdots}="14",(110,-12)*+{\mathrm{grad}_L(\ell_{3;t_{13}})\cdots}="15",(110,-20)*+{\mathrm{grad}_L(\ell_{1;t_{14}})\cdots}="16",(110,-28)*+{\mathrm{grad}_L(\ell_{2;t_{15}})\cdots}="17",
(77.5,-24)*+{\mathrm{grad}_L(\ell_{3;t_7})}="9", \ar@{-}^{1}"2";"4"\ar@{-}_{2}"2";"5"\ar@{-}^{3}"4";"6"\ar@{-}_{2}"4";"7"\ar@{-}^{1}"5";"8"\ar@{-}_{3}"5";"9"\ar@{-}^{1}"6";"10"\ar@{-}_{2}"6";"11"\ar@{-}^{3}"7";"12"\ar@{-}_{1}"7";"13"\ar@{-}^{2}"8";"14"\ar@{-}_{3}"8";"15"\ar@{-}^{1}"9";"16"\ar@{-}_{2}"9";"17"
\end{xy}.
\end{align}
is the Stern-Brocot tree. 
Flips in direction 1 and 2 at $t_1$ are non-middle gradient flips, and we see that flips from left to right in \eqref{subtree} are all non-middle gradient flips inductively by Lemma \ref{lem:flip-gradient}. Furthermore, $\mathrm{grad}_L(\ell_{k;t})$ lying in \eqref{gradienttree} is the second largest number in $\mathrm{grad}_L(L_t)$. Therefore, by Remark \ref{stern-brocot-remark}, it suffices to show 
that a tree
\begin{align}\label{gradienttree2}
\begin{xy}(40,0)*+{\mathrm{grad}_L(L_{t_1})}="2",(55,16)*+{\mathrm{grad}_L(L_{t_2})}="4",(55,-16)*+{\mathrm{grad}_L(L_{t_3})}="5", 
(77.5,24)*+{\mathrm{grad}_L(L_{t_4})}="6",(77.5,8)*+{\mathrm{grad}_L(L_{t_5})}="7",(77.5,-8)*+{\mathrm{grad}_L(L_{t_6})}="8",(110,28)*+{\mathrm{grad}_L(L_{t_8})\cdots}="10",(110,20)*+{\mathrm{grad}_L(L_{t_9})\cdots}="11",(110,12)*+{\mathrm{grad}_L(L_{t_{10}})\cdots}="12",(110,4)*+{\mathrm{grad}_L(L_{t_{11}})\cdots}="13",(110,-4)*+{\mathrm{grad}_L(L_{t_{12}})\cdots}="14",(110,-12)*+{\mathrm{grad}_L(L_{t_{13}})\cdots}="15",(110,-20)*+{\mathrm{grad}_L(L_{t_{14}})\cdots}="16",(110,-28)*+{\mathrm{grad}_L(L_{t_{15}})\cdots}="17",
(77.5,-24)*+{\mathrm{grad}_L(L_{t_7})}="9", \ar@{-}^{1}"2";"4"\ar@{-}_{2}"2";"5"\ar@{-}^{3}"4";"6"\ar@{-}_{2}"4";"7"\ar@{-}^{1}"5";"8"\ar@{-}_{3}"5";"9"\ar@{-}^{1}"6";"10"\ar@{-}_{2}"6";"11"\ar@{-}^{3}"7";"12"\ar@{-}_{1}"7";"13"\ar@{-}^{2}"8";"14"\ar@{-}_{3}"8";"15"\ar@{-}^{1}"9";"16"\ar@{-}_{2}"9";"17"
\end{xy}.
\end{align}
is the Farey triple tree.
We have $\mathrm{grad}_L(L_{t_1})=\left(\dfrac{0}{1},\dfrac{1}{0},\dfrac{1}{1}\right)$. We assume $\mathrm{grad}_L(L_{t})=\left(\dfrac{a}{b},\dfrac{c}{d},\dfrac{e}{f}\right)$. By Lemma \ref{lem:flip-gradient} and Lemma \ref{lem:incsum}, if $\dfrac{a}{b}$ is the smallest or largest in those three, then the edge labeled by 1 is on the right of $\mathrm{grad}_L(L_{t})$ in \eqref{gradienttree2}, and we have $\left(\dfrac{a}{b},\dfrac{c}{d},\dfrac{e}{f}\right)\mapsto\mathrm{grad}_L(L_{t'})=\left(\dfrac{c+e}{d+f},\dfrac{c}{d},\dfrac{e}{f}\right)$ by a flip in direction 1 (we note that all of $\dfrac{c+e}{d+f},\dfrac{c}{d},\dfrac{e}{f}$ are irreducible again by Remark \ref{irreduciblesum}). Similarly, if $\dfrac{c}{d}$ or $\dfrac{e}{f}$ is the smallest or largest in those three, we have the desired triple. Therefore, \eqref{gradienttree2} corresponds with the Farey triple tree, and this finishes the proof.
\end{proof}

\begin{remark}\label{contrarymain1}
Contrary to Theorem \ref{thm:main1}, given a positive irreducible fraction, it is also possible to give the corresponding intersection vector: $\begin{bmatrix}
a-1\\b-1\\a+b-1
\end{bmatrix}$ for $\dfrac{a}{b}$.
\end{remark}

For the sake of discussion in Section 4, we give an description of intersection matrices.
\begin{corollary}\label{cor:vi--viii}
Let $t\in \TT'_3$ and $L_t=(\ell_{1;t},\ell_{2;t},\ell_{3;t})$ a triangulation. The intersection matrix $D(L, L_t)=(d_{ij})$ satisfies just one of the following:
       \vspace{2mm}
       \begin{itemize}
        \item [(i)] $\begin{bmatrix} d_{11}&d_{12}&d_{11}+d_{12}+1\\d_{21}&d_{22}&d_{21}+d_{22}+1\\d_{11}+d_{21}+1&d_{12}+d_{22}+1&d_{11}+d_{12}+d_{21}+d_{22}+3\end{bmatrix}$
        \vspace{2mm}
         \item [(ii)] $\begin{bmatrix} d_{12}+d_{13}+1&d_{12}&d_{13}\\d_{22}+d_{23}+1&d_{22}&d_{23}\\d_{12}+d_{13}+d_{22}+d_{23}+3&d_{12}+d_{22}+1&d_{13}+d_{23}+1\end{bmatrix}$
        \vspace{2mm}
         \item [(iii)] $\begin{bmatrix} d_{11}&d_{11}+d_{13}+1&d_{13}\\d_{21}&d_{21}+d_{23}+1&d_{23}\\d_{11}+d_{21}+1&d_{11}+d_{21}+d_{13}+d_{23}+3&d_{13}+d_{23}+1\end{bmatrix}$
        \end{itemize}
Moreover, we have the following diagram:
\begin{align*}
\begin{xy}(30,0)*+{\mathrm{(iii)}}="6",(60,0)*+{\mathrm{(ii)}}="7",(90,0)*+{\mathrm{(i)}}="8",
\ar@<0.5ex>^1"6";"7"\ar@<0.5ex>^2"7";"6"\ar@<0.5ex>^3"7";"8"\ar@<0.5ex>^1"8";"7"\ar@<1.5ex>@/^6mm/^3"6";"8"\ar@<-0.5ex>@/_6mm/^2"8";"6"
\end{xy},
\end{align*}
where $\begin{xy}(0,0)*+{\mathrm{(n)}}="1",(15,0)*+{\mathrm{(m)}}="2",\ar@{->}^{k}"1";"2"\end{xy}$ implies that $D(L,L_t)$ satisfying $\mathrm{(m)}$ is obtained from $\mathrm{(n)}$ by $\Phi_k$.        
\end{corollary}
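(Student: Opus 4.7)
The plan is to combine the structural constraint of Lemma~\ref{lem:incsum} (the three gradients of any triangulation form a Farey mediant triple) with the explicit formula of Lemma~\ref{lem:int-inc} (intersection vectors are determined by gradients) to classify $D(L,L_t)$, and then to use Lemma~\ref{lem:flip-gradient} to track how the mediant position evolves under flips.

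For the classification into (i), (ii), (iii), I would apply Lemma~\ref{lem:incsum} to write the three gradients of $L_t$ as the set $\{a/b,\,c/d,\,(a+c)/(b+d)\}$ with $a/b$ and $c/d$ irreducible; by Remark~\ref{irreduciblesum} the mediant $(a+c)/(b+d)$ is also irreducible and is distinct from the other two entries. The hypothesis that every $D(L,\ell_{i;t})$ is nonzero (equivalently, no $\ell_{i;t}$ coincides with $\ell_1$ or $\ell_2$) forces $a,b,c,d\geq 1$. By Lemma~\ref{lem:int-inc}, the arc with gradient $p/q$ contributes the row $(p-1,\,q-1,\,p+q-1)$ to $D(L,L_t)$. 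The three cases therefore correspond precisely to the mediant occupying position~3, position~1, or position~2 of $L_t$: a direct check shows that in each scenario the mediant row equals the componentwise sum of the other two rows plus $(1,1,1)$, which is exactly the relation defining (i), (ii), (iii) respectively.

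For the flip diagram, I would invoke parts (2) and (3) of Lemma~\ref{lem:flip-gradient}: if position $k$ currently carries an extreme (smallest or largest) gradient, then flipping in direction $k$ produces an arc whose gradient lies strictly between the two unchanged ones, so the mediant migrates to position $k$. Translating this via the previous paragraph gives all six arrows: starting from (iii) (mediant at position~2), $\Phi_1$ moves the mediant to position~1 (form (ii)) and $\Phi_3$ moves it to position~3 (form (i)); the remaining four arrows follow by the same logic applied to (i) and (ii).

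The argument is essentially bookkeeping once the three earlier lemmas are in hand; the only subtlety is to check that the mediant is unambiguously identified among the three gradients, which is granted by the pairwise distinctness in Remark~\ref{irreduciblesum}, so that exactly one of (i), (ii), (iii) applies. The three ``missing'' arrows $\Phi_j$ with $j$ equal to the mediant position are precisely the middle-arc flips covered by Lemma~\ref{lem:flip-gradient}(1); in those cases the type of the resulting triangulation is not determined by $L_t$ alone since it depends on whether the reduced expression of the new gradient is $(c-a)/(d-b)$ or $(a-c)/(b-d)$, which is exactly why the diagram restricts to non-middle flips.
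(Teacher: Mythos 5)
Your proposal is correct and is essentially the paper's own argument: the published proof consists only of the remark that the base case is of form (i) and that the rest ``follows from Lemmas \ref{lem:int-inc} and \ref{lem:incsum} inductively,'' and your explicit bookkeeping of which position carries the Farey mediant, together with Lemma \ref{lem:flip-gradient} to see that a non-middle flip moves the mediant to the flipped position, is exactly what that one-line induction is suppressing. One caveat on your ``direct check'': under the paper's stated convention the $i$-th row of $D(L,L_t)$ is $D(L,\ell_{i;t})^{T}=(a_i-1,\,b_i-1,\,a_i+b_i-1)$, so the third \emph{column} is always the sum of the first two plus $\mathbf{1}$, which means the printed matrices (ii) and (iii) (whose distinguished sum sits in a column other than the third) can never literally occur and are in fact the transposes of the matrices your argument produces; your ``mediant row at position $1$ (resp.\ $2$)'' reading is the one consistent with the flip diagram and with Lemma \ref{lem:description-dual}, so this mismatch is a transposition typo in the statement rather than a gap in your reasoning, but it is worth recording when you match your formulas to (ii) and (iii).
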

\begin{proof}
The statement follows from the fact that $D(L,L_{t_1})$ satisfies $\mathrm{(i)}$, and Lemmas \ref{lem:int-inc} and \ref{lem:incsum}.
\end{proof}
\section{Initial intersection vector tree and Calkin-Wilf tree}
In contrast to the previous section, we correspond the intersection vectors to vertices of a subtree $\TT'_3$ of \eqref{subtree} as 
\begin{align}\label{tree-dual}
\begin{xy}(40,0)*+{D(L_{t_1},\ell_{3})}="2",(55,16)*+{D(L_{t_2},\ell_{3})}="4",(55,-16)*+{D(L_{t_3},\ell_{3})}="5", 
(78,24)*+{D(L_{t_4},\ell_{3})}="6",(78,8)*+{D(L_{t_5},\ell_{3})}="7",(78,-8)*+{D(L_{t_6},\ell_{3})}="8",(110,28)*+{D(L_{t_8},\ell_{3})\cdots}="10",(110,20)*+{D(L_{t_9},\ell_{3})\cdots}="11",(110,12)*+{D(L_{t_{10}},\ell_{3})\cdots}="12",(110,4)*+{D^(L_{t_{11}},\ell_{3})\cdots}="13",(110,-4)*+{D(L_{t_{12}},\ell_{3})\cdots}="14",(110,-12)*+{D(L_{t_{13}},\ell_{3})\cdots}="15",(110,-20)*+{D(L_{t_{14}},\ell_{3})\cdots}="16",(110,-28)*+{D(L_{t_{15}},\ell_{3})\cdots}="17",
(78,-24)*+{D(L_{t_7},\ell_{3})}="9", \ar@{-}^{1}"2";"4"\ar@{-}_{2}"2";"5"\ar@{-}^{2}"4";"6"\ar@{-}_{3}"4";"7"\ar@{-}^{1}"5";"8"\ar@{-}_{3}"5";"9"\ar@{-}^{1}"6";"10"\ar@{-}_{3}"6";"11"\ar@{-}^{1}"7";"12"\ar@{-}_{2}"7";"13"\ar@{-}^{2}"8";"14"\ar@{-}_{3}"8";"15"\ar@{-}^{1}"9";"16"\ar@{-}_{2}"9";"17"
\end{xy}.
\end{align}
That is, we assign $D(L_{t_{i}},\ell_{3})$ to $t_i$. Its first seven vertices are as follows:
\begin{align*}
\begin{xy}(40,0)*+{\begin{bmatrix}0\\0\\1\end{bmatrix}}="2",(55,16)*+{\begin{bmatrix}2\\0\\1\end{bmatrix}}="4",(55,-16)*+{\begin{bmatrix}0\\2\\1\end{bmatrix}}="5", 
(75,24)*+{\begin{bmatrix}2\\4\\1\end{bmatrix}}="6",(75,8)*+{\begin{bmatrix}2\\0\\3\end{bmatrix}}="7",(75,-8)*+{\begin{bmatrix}4\\2\\1\end{bmatrix}}="8",(75,-24)*+{\begin{bmatrix}0\\2\\3\end{bmatrix}}="9", \ar@{-}^{1}"2";"4"\ar@{-}_{2}"2";"5"\ar@{-}^{2}"4";"6"\ar@{-}_{3}"4";"7"\ar@{-}^{1}"5";"8"\ar@{-}_{3}"5";"9"
\end{xy}.
\end{align*}
We denote this tree by $\mathrm{Tree} (D^\dag)$ and we call it the \emph{initial intersection vector tree}.
We make a preparation for describing the main theorem of this section. In this section, we regard $L_t$ as the initial triangulation. We define the \emph{initial intersection matrix flip}\footnote{In the context of cluster algebra theory, this operation is the special case of an \emph{initial mutation} of an $D$-matrix. See \cite{rs}.} $\Psi_k$ of $D(L_t,L)$ in direction $k$ as 
\begin{align}
\Psi_k(D(L_t,L))=D(\varphi_k(L_t),L).
\end{align}
The following proposition is clear:
\begin{proposition}\label{prop:duality}
We have 
\begin{align*}
    D(L_t,L)=(D(L,L_t))^T,
\end{align*}
where $T$ is the transposition.
\end{proposition}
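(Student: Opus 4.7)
The plan is to unpack the two definitions and reduce the statement to the symmetry of geometric intersection number. First I would recall that, by the definition in Section~2, the $(i,j)$-entry of the intersection matrix $D(L,L_t)$ is the minimal number of intersections of $\ell_{i;t}$ with $\ell_j$ on $S\setminus\{p\}$, with the convention that this number is set to $0$ whenever the two arcs coincide (as isotopy classes). Applying the same definition with the roles of $L$ and $L_t$ swapped, the $(i,j)$-entry of $D(L_t,L)$ is the minimal intersection number of $\ell_i$ with $\ell_{j;t}$.

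Next I would compute the $(i,j)$-entry of $(D(L,L_t))^T$: by the definition of transpose it equals the $(j,i)$-entry of $D(L,L_t)$, which is the intersection number of $\ell_{j;t}$ with $\ell_i$. Since the geometric intersection number on $S\setminus\{p\}$ depends only on the unordered pair of isotopy classes of arcs, this agrees with the intersection number of $\ell_i$ with $\ell_{j;t}$, and the ``same arc equals $0$'' convention is manifestly symmetric in its two arguments. Hence the $(i,j)$-entries of $D(L_t,L)$ and $(D(L,L_t))^T$ coincide for all $i,j\in\{1,2,3\}$, which gives the required matrix equality.

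I do not expect any genuine obstacle: the proposition is essentially a bookkeeping statement expressing that the intersection pairing between the arcs of $L$ and the arcs of $L_t$ is symmetric. The only point that deserves a brief check is the diagonal-type case where $\ell_i$ happens to coincide with some $\ell_{j;t}$, but since the convention on coinciding arcs is itself symmetric in $i$ and $j$, this case causes no trouble. This is also consistent with the author's comment that the proposition is clear.
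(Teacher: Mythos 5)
Your proof is correct and fills in exactly the reasoning the paper leaves implicit (the author simply declares the proposition clear): both entries reduce to the geometric intersection number of $\ell_i$ and $\ell_{j;t}$, which is symmetric in its two arguments, and the coinciding-arc convention is likewise symmetric. No gap.
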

By using this duality, we have the following property:
\begin{proposition}\label{prop:maximality}
For $\begin{xy}(0,0)*+{t}="A",(10,0)*+{t'}="B",\ar@{-}^k"A";"B" \end{xy}\in \TT'_3$, let
\begin{align*}
 D(L_t,\ell_3)=\begin{bmatrix}d_1\\d_2\\d_3\end{bmatrix}\quad \text{and} \quad  D(L_{t'},\ell_3)=\begin{bmatrix}d_1'\\d_2'\\d_3'\end{bmatrix}.
 \end{align*}
If $i\neq k$, then we have $d_i=d'_i$.
Furthermore, when $D(L_t,\ell_3)\geq 0$ and $D(L_{t'},\ell_3)\geq 0$, $d_k\neq \max\{d_1,d_2,d_3\}$ if and only if  $d'_k=\max\{d'_1,d'_2,d'_3\}$. 
\end{proposition}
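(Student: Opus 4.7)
The plan is to handle (1) directly from the definition of a flip, and (2) by a case analysis on where position $k$ sits in the gradient ordering of $L_t$.

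For (1), since a flip in direction $k$ replaces only the arc $\ell_{k;t}$, we have $\ell_{i;t'}=\ell_{i;t}$ for every $i\neq k$, so the intersections of $\ell_3$ with those arcs are unchanged and $d_i=d_i'$. Equivalently, Proposition~\ref{prop:duality} says the flip alters only the $k$-th row of $D(L,L_t)$, hence only the $k$-th column of its transpose.

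For (2), the key idea is to read off the three entries of $D(L_t,\ell_3)$ from the gradient triple of $L_t$. Since $t\in\TT'_3$, Lemma~\ref{lem:flip-gradient} gives that the three gradients of $L_t$ are nonnegative, while Lemma~\ref{lem:incsum} together with Remark~\ref{irreduciblesum} forces them to form a Farey triple $\{a/b,(a+c)/(b+d),c/d\}$ with $a/b<c/d$ and $(a,b),(c,d)$ coprime (each nonnegative). By Lemma~\ref{lem:int-inc}, an arc of gradient $p/q$ meets $\ell_3$ in $p+q-1$ points. Setting $\alpha=a+b-1$ and $\gamma=c+d-1$, the entries of $D(L_t,\ell_3)$ are therefore a permutation of $\{\alpha,\alpha+\gamma+1,\gamma\}$, attached to the positions carrying the smallest-, middle-, and largest-gradient arcs respectively. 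Since $\alpha,\gamma\geq 0$, the mediant value $\alpha+\gamma+1$ is the strict maximum, so $d_k$ is maximal in $\{d_1,d_2,d_3\}$ if and only if $\ell_{k;t}$ is the mediant-gradient arc of $L_t$.

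Lemma~\ref{lem:flip-gradient} then tracks how the flip moves position $k$ in the gradient ordering. If $\ell_{k;t}$ is the mediant, part~(1) gives that $\ell_{k;t'}$ has gradient outside $[a/b,c/d]$ and hence becomes extremal in $L_{t'}$, so $d_k'$ is not maximal. If $\ell_{k;t}$ is extremal, parts~(2)/(3) give that $\ell_{k;t'}$ has gradient strictly between the other two and is the new mediant of $L_{t'}$, so $d_k'$ is maximal. Combining: $d_k$ is not maximal iff $\ell_{k;t}$ is extremal iff $\ell_{k;t'}$ is the new mediant iff $d_k'$ is maximal.

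The main bookkeeping point I expect is to confirm that the flipped arc's gradient is again nonnegative so that Lemma~\ref{lem:int-inc} can be applied to it in $L_{t'}$; this is precisely where the hypothesis $D(L_{t'},\ell_3)\geq 0$ does its work, together with $t'\in\TT'_3$. Apart from this verification, the argument is a direct assembly of Lemmas~\ref{lem:incsum}, \ref{lem:flip-gradient}, and \ref{lem:int-inc}.
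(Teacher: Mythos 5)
Your proof is correct and follows essentially the same route as the paper's: both identify maximality of $d_k$ with $\ell_{k;t}$ being the middle-gradient (mediant) arc via Proposition~\ref{prop:duality} and Lemmas~\ref{lem:int-inc}, \ref{lem:incsum}, and then invoke Lemma~\ref{lem:flip-gradient} to see that a flip in direction $k$ toggles that property. Your version is in fact a little cleaner, since making the mediant entry $\alpha+\gamma+1$ explicit gives both directions of the equivalence at once, whereas the paper argues the two implications separately and treats $d_k=0$ as a special case.
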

\begin{proof}
The former follows from Proposition \ref{prop:duality}. We prove the ``only if" part of the latter. If $d_k=\max\{d_1,d_2,d_3\}$, the gradient of an arc corresponding to $k$th row of $D(L,L_t)$ is the second largest in three arcs because of Proposition \ref{prop:duality} and Lemmas \ref{lem:int-inc}, \ref{lem:incsum}. By Lemma \ref{lem:flip-gradient}, the gradient of an arc corresponding to $k$th row of $D(L,L_{t'})$ is not the second largest in three arcs. By Lemma Proposition \ref{prop:duality} and Lemmas \ref{lem:int-inc}, \ref{lem:incsum} again, we have $d'_k\neq\max\{d'_1,d'_2,d'_3\}$.  We prove the ``if" part. We assume that $d'_k=0$. By Proposition \ref{prop:duality} and Lemma \ref{lem:flip-gradient}, we have $D(L,L_{t'})\leq 0$ and this is confliction. In the case of $d_k\neq 0$, it is proved by considering the inverse of the ``only if" part with $d'_k\neq0$. 
\end{proof}
By Proposition \ref{prop:maximality}, if ${D(L_t,\ell_{3})}$ lies on the right endpoint of an edge labeled by $k$ in the tree of \eqref{tree-dual}, then the $k$th element of $D(L_t,\ell_{3})$ is the maximal in those three. In the rest of this section, we prove the following theorem:
\begin{theorem}\label{thm:main2}
 We set $D(L_t,\ell_3)=\begin{bmatrix}d_{13;t}\\d_{23;t}\\d_{33;t}\end{bmatrix}$. We define a map
 \begin{align*}
    h\colon \{D(L_t,\ell_3)\}_{t\in \TT'_3}\to \QQ
 \end{align*} 
 inductively as follows: we assign \begin{align*}
     D(L_{t_1},\ell_3)\mapsto\dfrac{d_{13;t_1}+1}{d_{23;t_1}+1}=\dfrac{1}{1}.
     \end{align*}
     Let $\{a,b,c\}=\{1,2,3\}$. When $D(L_t,\ell_3)\mapsto \dfrac{d_{a3;t}+1}{d_{b3;t}+1}$, and $\begin{xy}(0,0)*+{D(L_t,\ell_3)}="A",(23,0)*+{D(L_{t'},\ell_3)}="B",\ar@{-}^k"A";"B" \end{xy}$ with $d(t_1,t)<d(t_1,t')$,
\begin{itemize}
    \item if $k=a$, then we assign $D(L_{t'},\ell_3)\mapsto \dfrac{d_{c3;t}+1}{d_{b3;t}+1}$,
    \vspace{2mm}
    \item if $k=b$, then we assign $D(L_{t'},\ell_3)\mapsto \dfrac{d_{a3;t}+1}{d_{c3;t}+1}$.
    \end{itemize}
The Calkin-Wilf tree is obtained by replacing each vertex $v$ of $\mathrm{Tree}(D^\dag)$ with $h(v)$.
\end{theorem}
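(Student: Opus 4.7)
The plan is to prove the theorem by induction on the distance from $t_1$, showing that the tree obtained from $\mathrm{Tree}(D^\dag)$ by applying $h$ agrees with the Calkin-Wilf tree as a decorated tree: both the fractions at vertices and the labels on edges match. The central object I would maintain is the following invariant $(\ast)$ attached to every vertex $t \in \TT''_3$: there exists a permutation $(a,b,c)$ of $(1,2,3)$ such that (i) $c$ is the label of the edge from $t$ toward $t_1$, (ii) $h(t) = \dfrac{d_{a3;t}+1}{d_{b3;t}+1}$ with $D(L_t,\ell_3) = (d_{13;t}, d_{23;t}, d_{33;t})^T$, and (iii) $d_{c3;t}$ is the maximum entry of $D(L_t,\ell_3)$, satisfying $d_{c3;t} = d_{a3;t} + d_{b3;t} + 1$. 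Condition (iii) follows from Corollary \ref{cor:vi--viii} combined with Proposition \ref{prop:duality}: in each of cases (i)--(iii) of Corollary \ref{cor:vi--viii}, the third row of $D(L, L_t)$ has precisely the form ``one entry equals the sum of the other two plus one'', with the position of the maximum determined by the case. That this maximal position coincides with the back-to-root label is exactly Proposition \ref{prop:maximality}. The base case at $t_1$ with triple $(1,2,3)$ is immediate, since $D(L_{t_1},\ell_3) = (0,0,1)^T$, $h(t_1) = 1/1$, and the edge $t_0 t_1$ is labeled $3$.

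For the inductive step on $h$-values, consider an edge $t \to t'$ labeled $k \in \{a,b\}$. By Proposition \ref{prop:maximality}, only the $k$-th coordinate of the intersection vector changes, and the new value becomes the maximum of $D(L_{t'},\ell_3)$. If $k = a$, the defining rule of $h$ gives
\begin{align*}
h(t') = \dfrac{d_{c3;t}+1}{d_{b3;t}+1} = \dfrac{d_{a3;t} + d_{b3;t} + 2}{d_{b3;t}+1} = \dfrac{x+y}{y},
\end{align*}
where $h(t) = x/y$, using (iii). If $k = b$, then analogously $h(t') = \dfrac{x}{x+y}$. These are precisely the Calkin-Wilf generation rules for the right and left child respectively, under the identification: edge labeled $a$ at $t$ $\leftrightarrow$ right-child direction, edge labeled $b$ at $t$ $\leftrightarrow$ left-child direction. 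The invariant $(\ast)$ propagates to $t'$ with updated triple $(c,b,a)$ when $k = a$, or $(a,c,b)$ when $k = b$; Proposition \ref{prop:maximality} confirms that the new maximum lies at the new back-to-root position $a$, respectively $b$.

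For the inductive step on edge labels, I would compare the above triple-update against the Calkin-Wilf edge-label recursion. Under the identification back-to-root $\leftrightarrow$ parent, edge-$a$ $\leftrightarrow$ right-child edge, edge-$b$ $\leftrightarrow$ left-child edge: if we flip in direction $a$ (so $t'$ plays the role of right child of $t$), then at $t'$ the back-to-root edge has label $a$, the left-child edge has label $b$, and the right-child edge has label $c$. The Calkin-Wilf rule for a right child with sibling edge $b$, own parent edge $a$, and missing label $c$ assigns to its children exactly the labels $b$ (left) and $c$ (right), matching our update. The symmetric check handles the flip in direction $b$. Since both trees are full binary trees, agree at the root, and obey the same recursion for both vertex fractions and edge labels, the induction concludes the proof.

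The main obstacle will be the combinatorial bookkeeping in this last step: the Calkin-Wilf rule branches on whether the current vertex is a left or right child of its parent, and one must align each case with the appropriate flip direction and corresponding permutation update of $(a,b,c)$. Once the invariant $(\ast)$ is correctly formulated and Proposition \ref{prop:maximality} aligns the maximal position with the back-to-root edge, the verification reduces to a mechanical check of four sub-cases (two flip directions times two parent-child configurations), all following the same pattern.
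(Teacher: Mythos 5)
Your proposal is correct and follows essentially the same route as the paper: both rest on the explicit ``sum'' structure of the third column of $D(L_t,L)$ (your condition $d_{c3;t}=d_{a3;t}+d_{b3;t}+1$ is exactly what Lemma \ref{lem:description-dual}, i.e.\ Corollary \ref{cor:vi--viii} transposed via Proposition \ref{prop:duality}, provides), combined with Proposition \ref{prop:maximality} to locate the maximal entry, yielding the computation $h(t')=(x+y)/y$ or $x/(x+y)$. Your write-up is somewhat more explicit than the paper's about verifying that the edge labels also match the Calkin-Wilf labeling rule, but this is a matter of bookkeeping rather than a different argument.
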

In the rest of this section, we prove Theorem \ref{thm:main2}. The following lemma is duality of Lemma \ref{cor:vi--viii}:
\begin{lemma}\label{lem:description-dual}
We fix $t\in \TT'_3$. The intersection matrix $D(L_t, L)=(d_{ij})$ satisfies just one of the following: 
    \begin{itemize}
        \item [(i)] $\begin{bmatrix} d_{11}&d_{12}&d_{11}+d_{12}+1\\d_{21}&d_{22}&d_{21}+d_{22}+1\\d_{11}+d_{21}+1&d_{12}+d_{22}+1&d_{11}+d_{12}+d_{21}+d_{22}+3\end{bmatrix}$
        \vspace{2mm}
         \item [(ii)] $\begin{bmatrix} d_{21}+d_{31}+1&d_{22}+d_{32}+1&d_{21}+d_{31}+d_{22}+d_{32}+3\\d_{21}&d_{22}&d_{21}+d_{22}+1\\d_{31}&d_{32}&d_{31}+d_{32}+1\end{bmatrix}$
        \vspace{2mm}
         \item [(iii)] $\begin{bmatrix} d_{11}&d_{12}&d_{11}+d_{12}+1\\d_{11}+d_{31}+1&d_{12}+d_{32}+1&d_{11}+d_{12}+d_{31}+d_{32}+3\\d_{31}&d_{32}&d_{31}+d_{32}+1\end{bmatrix}$.
\end{itemize}
Moreover, we have the following diagram:
\begin{align*}
\begin{xy}(30,0)*+{\mathrm{(iii)}}="6",(60,0)*+{\mathrm{(ii)}}="7",(90,0)*+{\mathrm{(i)}}="8",
\ar@<0.5ex>^1"6";"7"\ar@<0.5ex>^2"7";"6"\ar@<0.5ex>^3"7";"8"\ar@<0.5ex>^1"8";"7"\ar@<1.5ex>@/^6mm/^3"6";"8"\ar@<-0.5ex>@/_6mm/^2"8";"6"
\end{xy},
\end{align*}
where $\begin{xy}(0,0)*+{\mathrm{(n)}}="1",(15,0)*+{\mathrm{(m)}}="2",\ar@{->}^{k}"1";"2"\end{xy}$ implies that $D(L_t,L)$ satisfying $\mathrm{(m)}$ is obtained from $\mathrm{(n)}$ by $\Psi_k$.  
\end{lemma}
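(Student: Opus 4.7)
The plan is to derive this lemma directly from its counterpart Corollary \ref{cor:vi--viii} via the duality of Proposition \ref{prop:duality}. Since $D(L_t,L) = D(L,L_t)^T$, every classification statement about $D(L,L_t)$ transposes to a classification statement about $D(L_t,L)$, so no fresh surface-geometric argument is required.

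First I would transpose each of the three forms (i), (ii), (iii) in Corollary \ref{cor:vi--viii} and, after a purely cosmetic relabeling of the free parameters (swapping $d_{ij}\leftrightarrow d_{ji}$), verify that the resulting matrices are exactly the three forms listed in the present lemma. The structural role of each form is preserved under transposition: Corollary form (i), whose "derived" row and column are both the third, transposes to Lemma form (i); Corollary form (ii), whose first column is derived, transposes to a matrix whose first row is derived, matching Lemma form (ii); and similarly Corollary (iii) transposes to Lemma (iii). Corollary \ref{cor:vi--viii} asserts that $D(L,L_t)$ takes one of these three shapes for every $t\in \TT'_3$, so by duality $D(L_t,L)$ takes one of the three shapes above.

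For the transformation diagram, the essential identity is
\[
\Psi_k\bigl(D(L_t,L)\bigr) \;=\; D(\varphi_k(L_t),L) \;=\; D(L,\varphi_k(L_t))^T \;=\; \bigl(\Phi_k(D(L,L_t))\bigr)^T,
\]
obtained by combining the definitions of $\Psi_k$, $\Phi_k$, and Proposition \ref{prop:duality}. Hence $\Psi_k$ acts on the transposed matrices in precisely the way $\Phi_k$ acts on the originals, and since the identification of forms is preserved by transposition, the $\Psi_k$-diagram between forms (i), (ii), (iii) coincides label-for-label with the $\Phi_k$-diagram already proved in Corollary \ref{cor:vi--viii}.

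The only real obstacle is the index bookkeeping in the transposition/relabeling step: one must check explicitly that the entry patterns (which row is the "sum row", which column is the "sum column", and in which slot the global sum $+3$ appears) do match up under the renaming. Once that routine verification is done, both parts of the lemma are formal consequences of the duality, so no deeper argument is needed.
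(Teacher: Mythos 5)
Your proposal is correct and follows exactly the paper's own route: the paper proves this lemma in one line by citing Corollary \ref{cor:vi--viii} together with the duality $D(L_t,L)=D(L,L_t)^T$ of Proposition \ref{prop:duality}, and your identity $\Psi_k(D(L_t,L))=(\Phi_k(D(L,L_t)))^T$ is precisely the implicit content of that citation. You have merely spelled out the index bookkeeping that the paper leaves to the reader.
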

\begin{proof}
It follows from Corollary \ref{cor:vi--viii} and Proposition \ref{prop:duality}.
\end{proof}
Let us prove the main theorem in this section.
\begin{proof}[Proof of Theorem \ref{thm:main2}]
 For a fraction $q$, we denote the numerator and denominator of $q$ by $q_n, q_d$, respectively. It suffices to show the following:
for any (n) in (i)--(iii) in the diagram of Lemma \ref{lem:description-dual} and $D(L_t,\ell_3)$ satisfying (n), if
\begin{align*}
h(D(L_t,\ell_3))_n=x \quad h(D(L_t,\ell_3))_d=y,
\end{align*}
then we have 
\begin{align*}
    &h(\Psi_a(D(L_{t},\ell_3)))_n=x+y, \quad h(\Psi_a(D(L_{t},\ell_3)))_d=y,\\
 &h(\Psi_b(D(L_{t},\ell_3)))_n=x, \quad h(\Psi_b(D(L_{t},\ell_3)))_d=x+y.
\end{align*}
We set $D(L_t,L)=\begin{bmatrix}d_{11;t}&d_{12;t}&d_{13;t}\\d_{21;t}&d_{22;t}&d_{23;t}\\d_{31;t}&d_{32;t}&d_{33;t}\end{bmatrix}$. We prove the case that $D(L_t,\ell_3)$ satisfies (i). First, we assume that 
 \begin{align*}
     h(D(L_t,\ell_3))=\frac{d_{13;t}+1}{d_{23;t}+1}.
 \end{align*}
By Lemma \ref{lem:description-dual} and definition of $h$, for $\begin{xy}(0,0)*+{t}="A",(10,0)*+{t'}="B",\ar@{-}^1"A";"B" \end{xy}$ and $\begin{xy}(0,0)*+{t}="A",(10,0)*+{t''}="B",\ar@{-}^2"A";"B" \end{xy}$, we have
 \begin{align*}
    h(D(L_t,\ell_3))&=\frac{d_{13;t}+1}{d_{23;t}+1}=\dfrac{d_{11;t}+d_{12;t}+2}{d_{21;t}+d_{22;t}+2},\\[2pt]
    h(D(L_{t'},\ell_3))&=\frac{d_{33;t}+1}{d_{23;t}+1}=\dfrac{d_{11;t}+d_{12;t}+d_{21;t}+d_{22;t}+4}{d_{21;t}+d_{22;t}+2}\\[2pt]
    &=\dfrac{(d_{11;t}+d_{12;t}+2)+(d_{21;t}+d_{22;t}+2)}{d_{21;t}+d_{22;t}+2},\\[2pt]
    h(D(L_{t''},\ell_3))&=\dfrac{d_{13;t}+1}{d_{33;t}+1}=\dfrac{d_{11;t}+d_{12;t}+2}{d_{11;t}+d_{12;t}+d_{21;t}+d_{22;t}+4}\\[2pt]
   & =\dfrac{d_{11;t}+d_{12;t}+2}{(d_{11;t}+d_{12;t}+2)+(d_{21;t}+d_{22;t}+2)}.
\end{align*}
Moreover, we have $h(D(L_{t'},\ell_3))=\dfrac{d_{33;t'}+1}{d_{23;t'}+1}$ and $h(D(L_{t''},\ell_3))=\dfrac{d_{13;t''}+1}{d_{33;t''}+1}$ by Proposition \ref{prop:maximality}.
 Second, in the case that 
 \begin{align*}
     h(D(L_t,\ell_3))=\frac{d_{23;t}+1}{d_{13;t}+1},
 \end{align*}
 we can prove in the same way as the first case.
 Therefore, in (i), $D(L_t,\ell_3)$ satisfies the desired condition. 
 By symmetry, we can also prove the case that $D(L_t,\ell_3)$ satisfies (ii) or (iii).
\end{proof}
\begin{remark}\label{contrarymain2}
Contrary to Theorem \ref{thm:main2}, given a positive irreducible fraction, it is also possible to give the corresponding initial intersection vector inductively by using Calkin-Wilf tree. Actually, the entry of $D(L,\ell)$ that is not used in the map $h$ in Theorem \ref{thm:main2} can be recovered as $a+b+1$ using the other two components $a$ and $b$ from Lemma \ref{lem:description-dual}.
\end{remark}
\section{Christoffel tree and Cohn tree}
In this section, we give the duality of the Christoffel tree and the Cohn tree by using the duality of the Stern-Brocot tree and the Calkin-Wilf tree. First, we define the Christoffel path\footnote{This is also called the \emph{maximal Dyck path.}} and the Christoffel word. Let $\ell_{y/x}$ be a line segment in $\mathbb{R}^2$ from $(0,0)$ to $(x,y)\in\ZZ^2$. Now, we assume that $y/x$ is irreducible. We define the \emph{(lower) Christoffel path} $c_{y/x}$ of slope $y/x$ as the staircase walk on $\ZZ_{\geq0}^2$ from $(0,0)$ to $(x,y)$ satisfying the following conditions:
\begin{itemize}
    \item $c_{y/x}$ does not pass through the region above $\ell_{y/x}$,
    \item no element in $\ZZ^2$ is contained in the interior of the region enclosed by $c_{y/x}$ and $\ell_{y/x}$. 
\end{itemize}
Next, we consider a word that is formed by adding $a$ to the right for each parallel move and adding $b$ to the right for each perpendicular move from $(0,0)$ to $(x,y)$ along the Christoffel path $c_{y/x}$. This word is called the \emph{Christoffel word} of slope $y/x$.
\begin{example}
When $(x,y)=(5,3)$, the Christoffel path is the staircase walk in the Figure \ref{fig:Dyck-path}. Moreover, the Christoffel word of slope $3/5$ is $aabaabab$.
\begin{figure}[ht]
\caption{Christoffel path of slope $3/5$.}
\vspace{2mm}
\begin{center}
$\begin{tikzpicture}
\draw[dashed, step=1,color=black] (0,0) grid (5,3);
\draw[line width=2,color=black] (0,0)--(2,0)--(2,1)--(3,1)--(4,1)--(4,2)--(5,2)--(5,3);
\draw(0,0)--(5,3);
\draw (2.5,2.5) node[anchor=north] {$\ell_{3/5}$};
\draw (3.5,0.8) node[anchor=north] {$c_{3/5}$};
\end{tikzpicture}$
\label{fig:Dyck-path}
\end{center}
\end{figure}
\end{example}
We use $\prec$ as the lexicographic order. We define the Christoffel tree. This is the following tree: the root is $(a,b)$, and the generation rule is that a parent $(u,v)$ has the following children:

\begin{align*}
\begin{xy}(0,0)*+{(u,v)}="1",(-15,-15)*+{(u,uv)}="2",(15,-15)*+{(uv,v)}="3", \ar@{-}"1";"2"\ar@{-}"1";"3"
\end{xy}.
\end{align*}
The first few terms are as follows:
\begin{align*}
\begin{xy}(40,0)*+{(a,b)}="2",(55,16)*+{(ab,b)}="4",(55,-16)*+{(a,ab)}="5", 
(80,24)*+{(ab^2,b)}="6",(80,8)*+{(ab,ab^2)}="7",(80,-8)*+{(a^2b,ab)}="8",(115,28)*+{(ab^3,b)\cdots}="10",(115,20)*+{(ab^2,ab^3)\cdots}="11",(115,12)*+{(abab^2,ab^2)\cdots}="12",(115,4)*+{(ab,abab^2)\cdots}="13",(115,-4)*+{(a^2bab,ab)\cdots}="14",(115,-12)*+{(a^2b,a^2bab)\cdots}="15",(115,-20)*+{(a^3b,a^2b)\cdots}="16",(115,-28)*+{(a,a^3b)\cdots}="17",
(80,-24)*+{(a,a^2b)}="9", \ar@{-}^1"2";"4"\ar@{-}_2"2";"5"\ar@{-}^3"4";"6"\ar@{-}_2"4";"7"\ar@{-}^1"5";"8"\ar@{-}_3"5";"9"\ar@{-}^1"6";"10"\ar@{-}_2"6";"11"\ar@{-}^3"7";"12"\ar@{-}_1"7";"13"\ar@{-}^2"8";"14"\ar@{-}_3"8";"15"\ar@{-}^1"9";"16"\ar@{-}_2"9";"17"
\end{xy}.
\end{align*}
Here, the labeling rule is the same as the Calkin-Wilf tree's rule. We denote it by $\mathrm{Tree}(\mathrm{Ch})$. 
The Christoffel tree is also constructed in the following way: first, we consider the \emph{Christoffel triple tree}. This is a full binary tree given in the following way: the root is $\left(a,b,ab\right)$, and the generation rule is that a parent $\left(u,v,w\right)$ has the following two children: if the second largest word in lexicographic order is (i) $u$, (ii) $v$, (iii)$w$, then 
\begin{align*}
\begin{xy}(-20,10)*+{\mathrm{(i)}}="0",(0,0)*+{(u,v,w)}="1",(-12.5,-15)*+{(u,u*w,w)}="2",(12.5,-15)*+{(u,v,u*v)}="3", \ar@{-}_2"1";"2"\ar@{-}^3"1";"3"
\end{xy}
\begin{xy}(-20,10)*+{\mathrm{(ii)}},(0,0)*+{(u,v,w)}="1",(-12.5,-15)*+{(v*w,v,w)}="2",(12.5,-15)*+{(u,v,u*v)}="3", \ar@{-}_1"1";"2"\ar@{-}^3"1";"3"
\end{xy}
\begin{xy}(-20,10)*+{\mathrm{(iii)}},(0,0)*+{(u,v,w)}="1",(-12.5,-15)*+{(v*w,v,w)}="2",(12.5,-15)*+{(u,u*w,w)}="3",(25,-18)*+{.}, \ar@{-}_1"1";"2"\ar@{-}^2"1";"3"
\end{xy}
\end{align*}
where 
\begin{align*}
    u*v=\begin{cases}uv & \text{if } u\prec v\\
    vu & \text{if } v\prec u.
    \end{cases}
\end{align*}
We denote it by $\mathrm{Tree}(3\mathrm{Ch})$. 
The Christoffel tree is obtained from the Christoffel triple tree by replacing vertices with pairs of largest and smallest words in them.  
The following theorem is proved by Berstel and de Luca.
\begin{theorem}[\cite{BdL}]\label{Christoffel-main}
\noindent
\begin{itemize}
\item[(1)] In the Christoffel tree, all elements in each vertex are Christoffel words. Furthermore, for $(u,v)\in \mathrm{Tree}(\mathrm{Ch})$, $uv$ is a Christoffel word.
\item[(2)] The tree obtained by replacing $(u,v)\in \mathrm{Tree}(\mathrm{Ch})$ with $\dfrac{|u|}{|v|}$ is the Calkin-Wilf tree.
\item[(3)] The tree obtained by replacing $(u,v)\in \mathrm{Tree}(\mathrm{Ch})$ with $\dfrac{|uv|_b}{|uv|_a}$ is the Stern-Brocot tree.
\end{itemize}
Here, $|u|$ is the total number of letters $a$ and $b$ in $u$, $|u|_a$ is the number of letters $a$, $|u|_b$ is the number of letters $b$.
\end{theorem}

Next, we give the duality of Theorem \ref{Christoffel-main}.

We introduce the Cohn tree. First, we define the Cohn triple tree, whose vertices are triples of words. The root is $(a,b,ab)$, and generation rule is as follows:
for $(u,v,w)$, its left children is $(u_1,v_1,w_1)$, where $u_1,v_1,w_1$ is obtained from $(u,v,w)$ by replacing all $b$ in their words with $ab$ respectively. On the other hand, its right children is $(u_2,v_2,w_2)$, where $u_2,v_2,w_2$ is obtained from $(u,v,w)$ by replacing all $a$ in their words with $ab$ respectively.
The first few terms are as follows:
\begin{align*}
\begin{xy}(40,0)*+{(a,b,ab)}="2",(55,16)*+{(ab,b,ab^2)}="4",(55,-16)*+{(a,ab,a^2b)}="5", 
(80,24)*+{(ab^2,b,ab^3)}="6",(80,8)*+{(a^2b,ab,a^2bab)}="7",(80,-8)*+{(ab,ab^2,abab^2)}="8",(120,28)*+{(ab^3,b,ab^4)\cdots}="10",(120,20)*+{(a^2bab,ab,a^2babab)\cdots}="11",(120,12)*+{(abab^2,ab^2,abab^2ab^2)\cdots}="12",(120,4)*+{(a^3b,a^2b,a^3ba^2b)\cdots}="13",(120,-4)*+{(ab^2,ab^3,ab^2ab^3)\cdots}="14",(120,-12)*+{(a^2b,a^2bab,a^2ba^2bab)\cdots}="15",(120,-20)*+{(ab,abab^2,ababab^2)\cdots}="16",(120,-28)*+{(a,a^3b,a^4b)\cdots}="17",
(80,-24)*+{(a,a^2b,a^3b)}="9", \ar@{-}"2";"4"\ar@{-}"2";"5"\ar@{-}"4";"6"\ar@{-}"4";"7"\ar@{-}"5";"8"\ar@{-}"5";"9"\ar@{-}"6";"10"\ar@{-}"6";"11"\ar@{-}"7";"12"\ar@{-}"7";"13"\ar@{-}"8";"14"\ar@{-}"8";"15"\ar@{-}"9";"16"\ar@{-}"9";"17"
\end{xy}.
\end{align*}
By definition, the third entry of each vertex is a word combined the first entry with the second entry. We call this tree the \emph{Cohn triple tree} and denote it by $\mathrm{Tree}(3\mathrm{Co})$. Furthermore, we define the \emph{Cohn tree} as a tree obtained by replacing each vertex in the Cohn triple tree with its first and second entries. The first few terms of the Cohn tree are as follows:
\begin{align*}
\begin{xy}(40,0)*+{(a,b)}="2",(55,16)*+{(ab,b)}="4",(55,-16)*+{(a,ab)}="5", 
(80,24)*+{(ab^2,b)}="6",(80,8)*+{(a^2b,ab)}="7",(80,-8)*+{(ab,ab^2)}="8",(110,28)*+{(ab^3,b)\cdots}="10",(110,20)*+{(a^2bab,ab)\cdots}="11",(110,12)*+{(abab^2,ab^2)\cdots}="12",(110,4)*+{(a^3b,a^2b)\cdots}="13",(110,-4)*+{(ab^2,ab^3)\cdots}="14",(110,-12)*+{(a^2b,a^2bab)\cdots}="15",(110,-20)*+{(ab,abab^2)\cdots}="16",(110,-28)*+{(a,a^3b)\cdots}="17",
(80,-24)*+{(a,a^2b)}="9", \ar@{-}"2";"4"\ar@{-}"2";"5"\ar@{-}"4";"6"\ar@{-}"4";"7"\ar@{-}"5";"8"\ar@{-}"5";"9"\ar@{-}"6";"10"\ar@{-}"6";"11"\ar@{-}"7";"12"\ar@{-}"7";"13"\ar@{-}"8";"14"\ar@{-}"8";"15"\ar@{-}"9";"16"\ar@{-}"9";"17"
\end{xy}.
\end{align*}
We denote this tree by $\mathrm{Tree}(\mathrm{Co})$. 
We also introduce the \emph{combined Cohn tree}.
This tree is obtained from the Cohn tree by replacing each vertex with the third element of it. We denote it by $\mathrm{Tree}(\mathrm{cCo})$.
We prove the following property:
\begin{theorem}[(1):\cite{aig}*{Theorem 7.6}]\label{dualChristoffel-main}
\noindent
\begin{itemize}
\item[(1)] In the Cohn tree, all elements in each vertex are Christoffel words. Furthermore, for $(u,v)\in \mathrm{Tree}(\mathrm{Co})$, $uv$ is a Christoffel word.
\item[(2)] The tree obtained by replacing $(u,v)\in \mathrm{Tree}(\mathrm{Co})$ with $\dfrac{|u|}{|v|}$ is the Stern-Brocot tree.
\item[(3)] The tree obtained by replacing $(u,v)\in \mathrm{Tree}(\mathrm{Co})$ with $\dfrac{|uv|_b}{|uv|_a}$ is the Calkin-Wilf tree.
\end{itemize}
\end{theorem}
These are dual statements of Theorem \ref{Christoffel-main}. We only prove (2) and (3).
\begin{proof}
    First, we prove (3). For the root $ab$ in $\mathrm{Tree}(\mathrm{cCo})$, we have $\dfrac{|ab|_b}{|ab|_a}=\dfrac{1}{1}$.
For $w\in \mathrm{Tree}(\mathrm{cCo})$, we assume that $w$ has the following two children:
\begin{align*}
\begin{xy}(0,0)*+{w}="1",(-10,-10)*+{w_1}="2",(10,-10)*+{w_2.}="3", \ar@{-}"1";"2"\ar@{-}"1";"3" 
\end{xy}
\end{align*}
Then it suffices to prove that
\begin{align*}
\begin{xy}(0,0)*+{\dfrac{|w|_b}{|w|_a}}="1",(-15,-15)*+{\dfrac{|w_1|_b}{|w_1|_a}}="2",(15,-15)*+{\dfrac{|w_2|_b}{|w_2|_a}}="3", \ar@{-}"1";"2"\ar@{-}"1";"3" 
\end{xy}
\end{align*}
is the Calkin-Wilf tree's generation rule, that is,
\begin{align*}
|w_1|_b=|w|_b,\quad &|w_1|_a=|w|_a+|w|_b,\\
|w_2|_b=|w|_a+|w|_b, \quad &|w_2|_a=|w|_a.
\end{align*}
It follows form the generation rule of $\mathrm{Tree}(3\mathrm{Co})$.

Next, we prove (2). For $t\in \TT'_3$ and corresponding vertex $(u_t,v_t,u_tv_t)\in \mathrm{Tree}(3\mathrm{Co})$, we consider 
\begin{align}
    \vv_t=\begin{bmatrix}
    |u_t|-1\\ 
    |v_t|-1\\ 
    |u_tv_t|-1\\
    \end{bmatrix}.
\end{align}
We denote the tree obtained from $\TT'_3$ by replacing $t$ with $\vv_t$ by $\mathrm{Tree}(V)$.
By Theorem \ref{thm:main1}, it suffices to show that $\mathrm{Tree}(V)=\mathrm{Tree}(D)$.
For $t\in\TT'_3$, we consider the following matrix $W_t$: when $t=t_1$, we set
\begin{align*}
    W_{t_1}=\begin{bmatrix}
    |a|_b-1 & |b|_b-1 & |ab|_b-1\\
    |a|_a-1 & |b|_a-1 & |ab|_a-1\\
    |a|-1 & |b|-1 & |ab|-1
    \end{bmatrix}
    =\begin{bmatrix}
    -1 & 0 & 0\\
    0 & -1 & 0\\
    0 & 0 & 1
    \end{bmatrix}.
\end{align*}
Furthermore, for $t\in\TT'_3$, we define $W_t$ respectively in the following way:  $\begin{xy}(0,0)*+{t}="A",(10,0)*+{t'}="B",\ar@{-}^1"A";"B" \end{xy}$ with $d(t_1,t)<d(t_1,t')$ and 
\begin{align*}
    W_{t}=\begin{bmatrix}
    |u_t|_x-1 & |v_t|_x-1 & |u_tv_t|_x-1\\
    |u_t|_y-1 & |v_t|_y-1 & |u_tv_t|_y-1\\
    |u_t|-1 & |v_t|-1 & |u_tv_t|-1\\
\end{bmatrix},
\end{align*}
then we set
\begin{align*}
    W_{t'}=\begin{bmatrix}
    |u_{t'}|-1 & |v_{t'}|-1 & |u_{t'}v_{t'}|-1\\
    |u_{t'}|_y-1 & |v_{t'}|_y-1 & |u_{t'}v_{t'}|_y-1\\
    |u_{t'}|_x-1 & |v_{t'}|_x-1 & |u_{t'}v_{t'}|_x-1\\
\end{bmatrix},
\end{align*}
where $\{x,y\}=\{a,b\}$.
In the same way, if
$\begin{xy}(0,0)*+{t}="A",(10,0)*+{t'}="B",\ar@{-}^1"A";"B" \end{xy}$ with $d(t_1,t)<d(t_1,t')$ and 
\begin{align*}
    W_{t}=\begin{bmatrix}
    |u_t|_x-1 & |v_t|_x-1 & |u_tv_t|_x-1\\
    |u_t|-1 & |v_t|-1 & |u_tv_t|-1\\
    |u_t|_y-1 & |v_t|_y-1 & |u_tv_t|_y-1
\end{bmatrix},
\end{align*}
then we set
\begin{align*}
    W_{t'}=\begin{bmatrix}
    |u_{t'}|-1 & |v_{t'}|-1 & |u_{t'}v_{t'}|-1\\
    |u_{t'}|_x-1 & |v_{t'}|_x-1 & |u_{t'}v_{t'}|_x-1\\
    |u_{t'}|_y-1 & |v_{t'}|_y-1 & |u_{t'}v_{t'}|_y-1
\end{bmatrix}.
\end{align*}
When $\begin{xy}(0,0)*+{t}="A",(10,0)*+{t'}="B",\ar@{-}^k"A";"B" \end{xy}$, like as $\begin{xy}(0,0)*+{t}="A",(10,0)*+{t'}="B",\ar@{-}^1"A";"B" \end{xy}$, if $k$th row of $W_t$ is
\begin{align*}
\begin{bmatrix}|u_{t}|_x-1&|v_{t}|_x-1&|u_{t}v_{t}|_x-1\end{bmatrix}
\end{align*} and
\begin{align*}
\begin{bmatrix}|u_{t}|-1&|v_{t}|-1&|u_{t}v_{t}|-1\end{bmatrix} 
\end{align*} is $\ell$th row of $W_t$, then the $k$th row of $W_{t'}$ is \begin{align*}
\begin{bmatrix}|u_{t'}|-1&|v_{t'}|-1&|u_{t'}v_{t'}|-1\end{bmatrix}
\end{align*}and the $\ell$th row of $W_{t'}$ is 
\begin{align*}
\begin{bmatrix}|u_{t'}|_x-1&|v_{t'}|_x-1&|u_{t'}v_{t'}|_x-1\end{bmatrix}.
\end{align*} The rest row of $W_{t'}$ is 
\begin{align*}
\begin{bmatrix}|u_{t'}|_y-1&|v_{t'}|_y-1&|u_{t'}v_{t'}|_y-1\end{bmatrix}.
\end{align*}
By Theorem \ref{thm:main2}, Lemma \ref{lem:description-dual} and Theorem \ref{dualChristoffel-main} (3), we have $W_t=D(L_t,L)$. Therefore, $W_t^T=D(L,L_t)$ by Proposition \ref{prop:duality}. Therefore, we have $\mathrm{Tree}(V)=\mathrm{Tree}(D)$.
\end{proof}
\bibliography{myrefs}
\end{document}